\DeclareSymbolFontAlphabet{\mathbbm}{bbold}
\DeclareSymbolFontAlphabet{\mathbb}{AMSb}%
\let\cite=\citet
\newtheorem{proposition}{Proposition}
\numberwithin{proposition}{section}
\newtheorem{theorem}{Theorem}
\numberwithin{theorem}{section}
\theoremstyle{definition}
\newtheorem{remark}{Remark}
\numberwithin{remark}{section}
\newcommand{\psu}{\unit{\gram \per \kilogram}}
\newcommand{\ind}{\ensuremath{\sigma}}
\newcommand{\proj}{\ensuremath{\Pi_h}}
\newcommand{\projS}{\ensuremath{\Pi_\Delta}}
\newcommand{\uh}{\ensuremath{\bu_h}}
\newcommand{\Ph}{\ensuremath{P_h}}
\newcommand{\w}{\mathbf{ w}}
\newcommand{\grad}{\GRAD}
\newcommand{\trn}[1]{{\left\vert\kern-0.25ex\left\vert\kern-0.25ex\left\vert #1 
    \right\vert\kern-0.25ex\right\vert\kern-0.25ex\right\vert}}
\newcommand{\fgref}[1]{Fig.~\ref{#1}}
\newcommand{\Fgref}[1]{Fig.~\ref{#1}}
\renewcommand{\eqref}[1]{~(\ref{#1})}
\newcommand{\Eqref}[1]{~(\ref{#1})}
\journal{Journal of Computational Physics}
\begin{document}
% A potential energy conserving finite element method with dissipative symmetric tensor-based artificial viscosity for natural convection problems: Applied to basal melting and ocean circulation under the Ryder glacier ice tongue
\date{}
\begin{frontmatter}
\title{A potential energy conserving finite element method for turbulent variable density flow: application to glacier-fjord circulation}
\author[1,2,3]{Lukas Lundgren\corref{cor1}}
\ead{lukas.lundgren@math.su.se}
\author[1,2,3]{Christian Helanow}
\ead{christian.helanow@proton.me}
\author[2,4]{Jonathan Wiskandt}
\ead{jonathan.wiskandt@misu.su.se}
\author[2,4]{Inga Monika Koszalka}
\ead{inga.koszalka@misu.su.se}
\author[1,2,3]{Josefin Ahlkrona}
\ead{ahlkrona@math.su.se}
\cortext[cor1]{Corresponding author}
\address[1]{Department of Mathematics, Stockholm, Sweden}
\address[2]{Bolin Centre for Climate Research, Stockholm University, Stockholm, Sweden}
\address[3]{Swedish e-Science Research Centre, Sweden}
\address[4]{Department of Meteorology, Stockholm University, Stockholm, Sweden}
% \address[6]{formerly a, b, c}

%--------------------------------------------------------------
%\address[6]{Baltic Sea Centre, Stockholm University, Stockholm, Sweden} Inga: I am still connected to ÖC but they don't pay my salary any more so it is not formal in the employment sense and I can't use the affiliation.
%--------------------------------------------

\begin{abstract}
We introduce a continuous Galerkin finite element discretization of the non-hydrostatic Boussinesq approximation of the Navier-Stokes equations, suitable for various applications such as coastal ocean dynamics and ice-ocean interactions, among others. In particular, we introduce a consistent modification of the gravity force term which enhances conservation properties for Galerkin methods without strictly enforcing the divergence-free condition. We show that this modification results in a sharp energy estimate, including both kinetic and potential energy. Additionally, we propose a new, symmetric, tensor-based viscosity operator that is especially suitable for modeling turbulence in stratified flow. The viscosity coefficients are constructed using a residual-based shock-capturing method and the method conserves angular momentum and dissipates kinetic energy.  We validate our proposed method through numerical tests and use it to model the ocean circulation and basal melting beneath the ice tongue of the Ryder Glacier and the adjacent Sherard Osborn fjord in two dimensions on a fully unstructured mesh. Our results compare favorably with a standard numerical ocean model, showing better resolved turbulent flow features and reduced artificial diffusion.
\end{abstract}
\begin{keyword} 
  Boussinesq approximation, conservation, tensor-based viscosity, structure-preserving discretization, ocean circulation, ice-ocean interaction
\end{keyword}
\end{frontmatter}
% EMAC formulation,

% \maketitle
% \title[FE modeling of plume dynamics]{A potential energy conserving finite element method with dissipative symmetric tensor-based artificial viscosity for natural convection problems: Applied to Basal Melting and Ocean Circulation under the Ryder Glacier Ice Tongue}

% for the Boussinesq approximation

% WT: Finite Element Modeling of Plume Dynamics and Basal Melting at the Ryder Glacier - A Continuous Galerkin Approach}
% A potential energy conserving formulation for Galerkin discretizations of the Boussinesq system
% Basal melting and flow dynamics at the Ryder Glacier using a potential energy and angular momentum conserving residual-based tensorviscosity finite element method

%\author[J. Ahlkrona]{Josefin Ahlkrona}
%\address[Josefin Ahlkrona]{ADD AFFILITATION}
%\email{ahlkrona@math.su.se}
%
%\author[D. Elverson]{Daniel Elverson}
%\address[Daniel Elverson]{ADD AFFILITATION}
%\email{ADD CURRENT EMAIL}
%
%\author[A. Massing]{Andr\'e Massing}
%\address[Andr\'e Massing]{Department of Mathematical Sciences, Norwegian University of Science and Technology, 7034 Trondheim, Norway}
%\email{andre.massing@ntnu.no}

% \date{\today}
%%%%%%%%%%%%%%%%%%%%%%%%%%%%%%%%%%%%%%%%%%%%%%%%%%%%%%%%%%%%%%%%%%%%%%%

\section{Introduction}

% Some symbol inspiration:
% Indicator: \ind
% 1. Calligraphic: \(\mathcal{\ind}\)

% 2. Script: \(\mathscr{\alpha}\) %
%
% 3. Fraktur: \(\mathfrak{\alpha}\)

% 4. Blackboard bold: \(\mathbb{\alpha} \mathbbm{\alpha} \polk \polV \nu \polv \mathbb{\Omega}  \polI  \bbkappa \bbnu \kappa \nu \bbalpha  \)
% %
% 5. Sans-serif: \(\mathsf{K}\)

% 6. Bold: \(\mathbf{K}\)

% 7. Text sans-serif: \(\textsf{K}\)

% 8. Text bold: \(\textbf{K}\)

% 9. Italic: \(\mathit{K}\) $ \mathbb{\kappa} \mathbb{\Omega} \bbg{\calM}\bbg{\kappa}\bbg{\Xi} $

% 10: \[ \doublestroke{\calM \Gamma \Delta \kappa} \]

% 11. $\mathbbm{K} \mathbbold{k} \sfK \bpolK \bK \vK \calK \mathbbold{\nu} \mathbb{\nu} \mathbbold{\Omega}$ Let $\boldsymbol\alpha$ be a vector.  Call it $\boldsymbol\beta_S$.

% 12. Cyrilic: \begin{russian}к \end{russian}

% 10. Bold calligraphic: \(\mathbcal{K}\)

Variable density flow is common in industry, appearing in natural convection flows within heat exchangers, cooling systems and thermal insulators \citep{MOJTABATABARHOSEINI202339}. It is also prevalent in nature; in fact, numerical simulations of variable density flow are central to climate models that can help us predict and prepare for the effects of global warming \citep{IPPC_2023}. Improved models are specifically needed for turbulent gravity-driven flows in the climate system, such as the melt-water driven ocean circulation near the Greenland and Antarctic ice sheets \citep{IPCCspecialreport}. The rate at which the ice sheet melts is influenced by ocean velocity, temperature and salinity at the ice-ocean interface. In turn, these properties are significantly affected by the melt rate itself. Such non-linear feedback between glacial ice dynamics and fjord circulation has the potential to be an important contributing factor to changes in ice sheet mass loss and thus rising sea levels \citep{IPCCspecialreport,NowickiSeroussi,IPPC_2023,Nowicki_etal2016,Holland2008b,Asay-Davis_etal2016,Fyke,tc-14-3071-2020,tc-14-3033-2020}.

Many simulation software for variable density flow and fluid dynamics in general - including atmospheric and ocean models - rely on the finite volume (FV) or finite difference (FD) method rather than the finite element method (FEM). This is largely due to the lack of conservation properties of traditional FEMs. However, FEM offers several attractive features such as a sound mathematical framework and the ability to accurately represent complex geometries. Properly representing flows in the presence of complex geometry is one of the major challenges in FD and FV climate models, where the use of terrain-following or partial cell techniques can introduce errors \citep{Doos2022,piggott,Kimura_etal2013}. %This is evident not the least in models of the ocean circulation near ice sheets is indeed the usage of structured box-type grids which are at a disadvantage compared to more flexible FEM meshes \citep{Piggott,Kimura_etal2013}. %Several model intercomparison projects (MIPs) have been launched in the climate modeling community to evaluate numerical models \citep{MIPs}. 
  %The resulting ocean hydro-dynamical equations are further numerically solved on finite difference (FD), finite volume (FV) or finite element model (FEM) grids and different algorithmic solutions for advection, diffusion, friction, forcing and boundary conditions (https://doi.org/10.2307/j.ctv301gzg; https://doi.org/10.16993/bbs).

%Another outstanding issue with climate modeling is the conservation properties (momentum and angular momentum conservation and energy and enstrophy transfers through spatial scales and tracer conservation) of theunderlying hydro-thermodynamical equations (e.g., https://epic.awi.de/id/eprint/36666/) as well as the numerical discretization including development of non-dissipative and tracer-conserving advection schemes relying on flux-formulation (e.g., https://www.sciencedirect.com/science/article/abs/pii/S1463500307000984), the choices of proper boundary conditions (https://www.sciencedirect.com/science/article/abs/pii/0021999176900231, https://www.tandfonline.com/doi/abs/10.1080/03091929008219853) and spurious numerical diffusion (e.g, https://linkinghub.elsevier.com/retrieve/pii/S1463500314000754).  

Discontinuous Galerkin (DG) methods are often suggested to address the issues with conservation of standard FEMs, both in fluid dynamics in general and for the coastal ocean and meltwater-driven circulation in particular. In \citep{karna} a DG hydrostatic model is developed for the coastal ocean and in \citep{Kimura_etal2013} and \citep{SCOTT2023102178} DG discretizations of a non-hydrostatic Boussinesq approximation of the Navier-Stokes equations are presented and used to model idealized configurations of the interaction between melting ice sheets and ocean circulation. %The latter is based on the Firedrake project \citep{FiredrakeUserManual}.% which is, in part, built on the FEniCS-project  \citep{fenics:book,Alnaes2015}.% based Unified Form Language \cite{UFL}. 
A drawback of DG methods is that they tend to be associated with a higher computational cost and can, in some software, be more cumbersome to implement compared to the more traditional $H^1$-conforming continuous Galerkin (CG) FEM methods. 

However, recent progress in improving conservation properties for incompressible flow problems using $H^1$-conforming FEM opens up the possibility of CG models of density-driven flow. For $H^1$-conforming FEM, it is common for the divergence-free condition to be only satisfied weakly, which has consequences for stability and conservation properties. Recently, \cite{Charnyi2017} derived a formulation of the convective term that enables Galerkin methods to be kinetic energy, momentum and angular momentum conserving (EMAC). This formulation has been extended to be more efficient \citep{Charnyi2019}, further analyzed by \cite{Olshanskii2020,Olshanskii2024}, and extended to projection methods by \cite{Ingimarson2023}.  In particular, \citep{Olshanskii2024} showed that $H^1$-conforming FEM also satisfies local conservation properties. The EMAC formulation consistently enhances the accuracy and reliability of simulations compared to other approaches, especially over long time intervals. Motivated by the success of these methods, \cite{lundgren2024fully} extended the EMAC formulation to the variable density setting and derived a formulation that is shift-invariant, mass, squared density, kinetic energy, momentum and angular momentum conserving (SI-MEDMAC). Again, this was demonstrated to be more accurate and robust compared to other formulations in the literature. 

These new types of CG methods are relevant to variable density flow in general. For variable density, gravity-driven, flow specifically there is a balance between kinetic and potential energy, and it is beneficial for a numerical method to capture this balance as it improves accuracy and robustness \citep{Shen2024}. In this paper, we extend the SI-MEDMAC formulation so that potential energy is also conserved and we demonstrate its capabilities on a simulation of ocean circulation in the Sherard Osborn fjord adjacent to the Ryder Glacier, Greenland.  The circulation near the ice is driven by buoyant meltwater rising along the ice surface, creating a turbulent plume (\fgref{fig:CirculationMelt}). 

%The new formulation can easily be implemented into existing Galerkin-based Boussinesq Navier-Stokes solvers.

% A buoyant layer of water forms close to the ice (typically referred to as a buoyant plume) and leaves the ice shelf when it reaches neutral buoyancy with respect to the surrounding water. The plume creates an estuarine circulation in the fjord, with an average overturning time $ \tau_0=11.9$ days, which is estimated as

%In this paper we develop Finite Element Methods (FEM) for simulation of Boussinesq flow, i.e.,  buoyancy-driven flow where density variations are only represented through the gravitational force. We use our method to simulate ocean circulation near a melting Greenlandic glacier. 

 Since the plume is highly turbulent, small-scale processes cannot be adequately resolved on a computational mesh. In the ocean modeling community, it is common to use tensor-based eddy viscosity \citep{Kimura_etal2013,SCOTT2023102178,Wiskandt,mitgcm_GOLDBERG,mitgcm_jordan} as a sub-grid scale model. This approach employs different viscosity coefficients in each coordinate direction, accommodating the significant variation in flow scales and mesh sizes between the horizontal and vertical directions. This enables a more accurate representation of these types of flow compared to scalar eddy viscosity. However, the tensors used in the references are not symmetric, resulting in the loss of angular momentum conservation \citep{lundgren2024fully,dao2024}.

We introduce a new symmetric viscosity tensor that conserves angular momentum and dissipates kinetic energy. To automatically construct the viscosity coefficients and maintain stability and accurate representation of small-scale structures, we scale the viscosity coefficients based on the residual of the PDE, a method commonly known as the residual viscosity (RV) method \citep{Nazarov_2013,Nazarov_Hoffman_2013,Stiernstrom2021,Lu2019,Marras2015,Dao_Nazarov_2022,lundgren_2024RV,Tominec2023}. Since the residual is small in smooth, resolved regions of the domain, high-order accuracy is preserved. For a comparison between the RV method and the well-known Lilly-Smagorinsky model, we refer to \cite{Marras2015}. For additional suppression of small-scale oscillations and improved accuracy in smooth regions, we also propose adding the high-order dissipation operator proposed by \cite{kuzmin2023_weno} which is a variation of local-projection stabilization \citep{Braack2006} and the variational multiscale (VMS) method \citep{John2006}. We note that the RV method is closely related to the entropy viscosity method \citep{Guermond_Pasquetti_2011}: for example, for scalar conservation laws, the residual is a special case of the entropy residual, since the solution to the equation can be chosen as an entropy functional. However, a similar proof of the convergence of the RV method for general non-linear scalar conservation laws, as in \citep{Nazarov_2013}, is not known for the entropy viscosity method. In addition, it can be difficult to choose the entropy functional so that the corresponding entropy viscosity method is robust.

Finally, to account for the boundary between the fjord and the surrounding open ocean, we also derive a modified directional do-nothing condition \citep{ddn} which is consistent with the new conservative formulation. The boundary condition is to our knowledge the first condition of its kind for FEM models of ice-ocean interaction. 

 The FEM is validated in two ways: 1) via benchmark problems to demonstrate convergence rates and energy conservation and 2) by simulating the circulation in an idealized 2D version of the Sherard Osborn Fjord, comparing our results with the high-resolution simulations of \citep{Wiskandt} which were produced with the FV state-of-the-art ocean-atmosphere model MITgcm (Massachusetts Institute of Technology general circulation model) \citep{Marshall,ADCROFT2004269}.

 %Our model is implemented within the FEniCS framework \citep{fenics:book,Alnaes2015} and

% Let $W^{m,2}(\Omega)$ denote the Sobolev space of functions with weak derivatives up to order $m$ which are $L^2$-integrable. Furthermore let $[H_{\mathbf{g}}^1(\Omega)]^d$ denote the set of all functions in $[H^1(\Omega)]^d:=[W^{1,2}(\Omega)]^2$ whose boundary traces are $\mathbf{g}$ and $L^2_{0}(\Omega)$ be all functions in $L^2(\Omega):=W^{0,2}(\Omega)$ with zero average.

% We will for most part of this paper focus on the case $d=2$ using the standard coordinates $(x,y)\in \mathbb{R}^2$.
% and $(\bu \otimes \bv)_{ij} = u_i v_j$.
\subsection{Notation and preliminaries}
Let $\Omega \subset \mathbb{R}^d$ denote an open and bounded domain with Lipschitz boundary $\Gamma = \partial \Omega$. Furthermore $L^2(\Omega)$ denotes all $L^2$-integrable functions on $\Omega$ and  the $L^2(\Omega)$-inner product is written as $( \SCAL , \SCAL )$ with associated norm $\| \SCAL \|$. We define $(\GRAD \bu) := \partial_{x_i} u_j $ and $H^1(\Omega)$ the Sobolev space of functions with weak derivatives up to order one which are $L^2$-integrable.

We define $\be_i$ to be a unit vector pointing in the $i$th coordinate direction, \ie in 3D $\be_1 =  \hat{\bx} = (1, 0, 0)^\top$, $\be_2 =  \hat{\by}= (0, 1, 0)^\top$ and $\be_3 =  \hat{\bz}= (0, 0, 1)^\top$, and denote $\polI$ as the identity matrix of size $d$.

The following identities will be used repeatedly throughout this paper. 
Given the trilinear form 
\begin{equation} \label{eq:trilinear definition}
b( \bu, \bv, \bw) :=  (\bu \SCAL \GRAD \bv, \bw) = \l( ( \GRAD \bv)^\top \bu, \bw \r) = ( (\GRAD \bv) \bw , \bu)
\end{equation}
and that $\rho, w \in H^1(\Omega)$ and $\bu, \bw, \bv \in \bH^1(\Omega) := [H^1(\Omega)]^d$: provided that $\bu \in \bH^1_0(\Omega)$ the following identities hold
\begin{align}
\label{eq:IBP1} &b ( \bu , \bv, \bw ) = - b( \bu, \bw , \bv ) - ( (\DIV \bu) \bv , \bw),\\% + ( (\bn \SCAL \bu) \bv, \bw )_{\p \Omega},  \\
\label{eq:IBP2} &b ( \bu , \bw, \bw ) =  - \frac{1}{2} ( (\DIV \bu) \bw , \bw),\\% + \frac{1}{2} ( ( \bu \SCAL \bn ) \bw , \bw)_{\p \Omega}, \\
\label{eq:advection_IBP1} & ( \bu \SCAL \GRAD \rho , w ) = -( \bu \SCAL \GRAD w, \rho ) - ( ( \DIV \bu ) \rho , w) , \\
\label{eq:advection_IBP2} & ( \bu \SCAL \GRAD \rho , \rho ) = - \frac{1}{2} ( ( \DIV \bu ) \rho , \rho ),
\end{align}
due to integration by parts.

The contraction operator, : , is defined as
\begin{equation}
A : B = \sum_{i,j = 1}^d A_{ij} B_{ij} = \text{tr} \l(A B^\top \r),
\end{equation}
for any two $d \times d$ matrices $A$ and $B$. We will use that a matrix and its transpose have the same trace and for any four matrices $A,B,C,D$ with matching dimensions, the trace operator fulfills the cyclic property:
\begin{align} 
\label{eq:trace_transpose}  \text{tr}(A) &= \text{tr}\l(A^\top \r),\\
\label{eq:cyclic} \text{tr}(A B C D) &= \text{tr}( B C D A).
\end{align}

%Any norm (or inner product) over a domain which is a collection of geometric entities is a broken norm $\|\SCAL\|^2_\mathcal{T} = \sum_{T \in \mathcal{T}}  \|\SCAL\|^2_T$ (or inner product). 
% Let $(\SCAL, \SCAL)_D$ denote the $L^2$ inner product over the domain $D$ and define

 %Viscosity

% of non-linear feedbacks \citep{Asay-Davis_etal2016,Choi_etal2017,Jordan_etal2018}. 
\section{Model problem and a novel formulation}
\subsection{Governing equations}
\label{sec:model-problem}

We consider the Boussinesq approximation of the Navier-Stokes equations:
\begin{subequations}
  \begin{align}
    \p_t \bu + \bu \SCAL \GRAD \bu & = - \frac{\GRAD p}{\rho_0} + \DIV \l( \nu  \l( \GRAD \bu + (\GRAD \bu)^\top \r) \r) + \bef ,  \quad & (\bx, t ) \in \Omega \CROSS \l(  0,\widehat{t} \ \r ] , \label{eq:ns_momentum}\\ 
    \DIV \bu &=0, \quad &(\bx, t ) \in \Omega \CROSS \l( 0, \widehat{t} \ \r ], \label{eq:ns_mass}
  \end{align}\label{eq:ns}
\end{subequations}
where $\widehat{t}$ is the final time, $ \bu$ is the velocity field, $p$ is the pressure, $\nu$ is kinematic water viscosity, $\rho_0$ is the reference density and
 \begin{equation}
 \bef :=  -\frac{g\rho}{\rho_0} \hat{\by}
 \end{equation}
is the body force. Note that instead of using the standard approach of adding horizontal and vertical eddy viscosity to the model (as seen in \citep{Wiskandt, Kimura_etal2013, SCOTT2023102178}), we only include the kinematic water viscosity. Sub-grid scale effects are instead managed by our stabilization technique which acts as an implicit large eddy simulation (LES), detailed in Section \ref{sec:discretization}. We apply the same approach to the diffusivity of the tracer equations discussed below.

% Rather than adopting the standard method of incorporating horizontal and vertical eddy viscosity into the model (as seen in \citep{Wiskandt, Kimura_etal2013, SCOTT2023102178}), we simplify by using only the kinematic viscosity of water, which corresponds to molecular viscosity in ocean modeling. Sub-grid scale effects are managed by our implicit LES stabilization technique, detailed in Section \ref{sec}. This method is also applied to the diffusivity in the tracer equations discussed later.

The body force depends on the constant of gravitational acceleration, $g$, and the density $\rho = \rho_0 + \delta\rho$, where $\delta \rho$ is a density perturbation around $\rho_0$. We here consider a temperature and salinity dependent density as relevant to ocean modeling. In particular, following the fjord-circulation model of \citep{Wiskandt} we use a linear equation of state 
\begin{equation}
  \label{eq:rho}
  \delta\rho = \rho_0 (-\alpha_T (T - T_0) + \beta_S (S - S_0)),
\end{equation}  
where $T_0$ and $S_0$ are the reference temperature and salinity, and $\alpha_T$ and $\beta_S$ are the thermal expansion and saline contraction coefficients. The evolution of the two tracers, temperature $T$ and salinity $S$, are described by the same general advection-diffusion equation. For a tracer $\phi$, we have
\begin{equation}\label{eq:tracer}
\frac{\partial \phi} {\partial t} + \bu \SCAL \GRAD \phi =  \GRAD\SCAL (\kappa_{\phi} \GRAD\phi) + \zeta(x) (\phi_{res}(y) - \phi), \quad (\bx, t ) \in \Omega \CROSS \l( 0, \widehat{t} \ \r ],
\end{equation}
where $\kappa_{\phi}$ is the physical diffusivity for each tracer (thermal or molecular). All parameter values relevant to glacier-fjord modeling are specified in Table \ref{tab:parameters}. The additional term on the right-hand side is a source term accounting for a restoring region, which will be described in detail in the following section. 

\begin{remark}
We do not consider the Coriolis force in this paper, relevant for large-scale ocean applications \citep{Doos2022}. For the numerical experiments on ice-ocean interaction, this is as in \citep{Wiskandt} motivated by the fact that the width of the Sherard Osborn fjord is about 9 km and rotational variabilities are small in comparison to along fjord changes.
\end{remark}

\subsection{Boundary and initial conditions}\label{sec:bcs}

We here state the boundary and initial conditions that are relevant for ice-ocean interaction that will be used in the simulations of the Sherard Osborn fjord in Section \ref{sec:numexp}. In certain experiments designed to support the theoretical analysis, simplified conditions are used and will be specified where applicable.

The boundary of the domain, $\Gamma$, is divided into the ice-ocean interface, $\Gamma_{ice}$, atmosphere-ocean interface, $\Gamma_{atm}$, the ocean floor, $\Gamma_{b}$, the interfaces to the open ocean, $\Gamma_{oc}$, and a small section beneath the ice grounding line, $\Gamma_{gl}$. The boundary $\Gamma_{gl}$ is introduced for the sole purpose of mimicking the setup in \citep{Wiskandt}. %The domain is in this study always specified so that $\Gamma_{atm}$ is a straight horizontal line and $\Gamma_{oc,gl}$ are straight vertical lines.
 
We consider a rigid-lid approximation with no-slip boundary conditions for the velocity at each boundary except at the open ocean boundary we apply a modified directional do-nothing condition \citep{ddn}. The non-homogeneous boundary conditions for the velocity and pressure can be summarized as  
% We consider a rigid-lid approximation so that the vertical velocity at the atmosphere-ocean interface equals zero and apply a free slip in the horizontal (tangential) direction. At the open ocean boundary, we apply a directional do-nothing condition.  Everywhere else, we apply a no-slip condition for the velocity. The boundary conditions for the velocity (and pressure) can be summarized as 

\begin{subequations}
  \begin{align}
  % \bu \SCAL \bn = 0, \quad  -  \bn \SCAL  \l( \nu  \l( \GRAD \bu + (\GRAD \bu)^\top \r) \r) \SCAL \bt &=0 \quad &\text{on } \Gamma_{atm},\label{eq:bc_lid}\\
   % u_y &= 0;\ \nu_y\frac{\partial u_x}{\partial y} = 0 \quad &\text{on } \Gamma_{atm},\label{eq:bc_lid}\\
%    \bu &= 0 \quad &\text{on } \Gamma \backslash \Gamma_{atm},\label{eq:bc_noslip}\\
      \bu  &= 0, \quad    &\text{on }  \Gamma_{atm} \cup \Gamma_{ice} \cup \Gamma_{b} \cup \Gamma_{gl},\label{eq:bc_drag}\\% - \bn \SCAL  \l( \nu  \l( \GRAD \bu + (\GRAD \bu)^\top \r) \r) \SCAL \bt = c_D |\bu| \bu \SCAL \bt \quad
       \label{eq:ddn}
        \l( \nu  \l( \GRAD \bu +  \l(\GRAD \bu  \r)^\top  \r) -  \frac{p}{\rho_0} \polI \r)  \bn - \frac{1}{2} ( \bu \SCAL \bn )_- \bu &= -p_{hyd}  \bn , \quad \text{on } \Gamma_{oc}, % Lukas: I double-checked the sign here
  \end{align}
  \label{eq:ocean_bcs}
\end{subequations}
where $(\bn \SCAL \bu)_\pm = |\bn \SCAL \bu| \pm \bn \SCAL \bu $ and
%\begin{equation}
%p_{hyd} = C + \int_{y_{min}}^{y}  \bef \SCAL \hat{\by} dy ,
%\end{equation}
\begin{equation}
  p_{hyd} = C + \int_{y_{min}}^{y}  \bef \SCAL \hat{\by} dy,
  \end{equation}
is the hydrostatic pressure and $C$ is an integration constant.
%where the expression for the free-slip condition is due to the horizontal boundary $\Gamma_{atm}$.
The modified directional do-nothing condition \eqref{eq:ddn} models the connection between the fjord and the surrounding open ocean and is based on the original directional do-nothing conditions \citep{ddn}. Rather than assuming a stress-free boundary as in \citep{ddn}, we apply a hydrostatic pressure from the open ocean so that the fjord water will be at rest if there is no melting or initial flow. The integration constant $C$ is set so that the hydrostatic pressure is $0$ at the atmosphere (top). 
% here an external hydrostatic pressure field is applied to balance the stress on either side of the open boundary, thus preventing fluid from ‘falling’ out of the domain.

For the tracers $\phi=T$ or $\phi=S$, an insulating homogenous Neumann condition is applied at $\Gamma_{b}$, $\Gamma_{gl}$ and $\Gamma_{atm}$. At the ice-ocean interface $\Gamma_{ice}$ a tracer flux $F_{\phi}$ is prescribed which represents meltwater input via the so-called virtual salt flux formulation. These fluxes are dependent on the ocean (and ice) state and are modeled by the commonly used three-equation model as described in Section \ref{sec:numexp}. At the open ocean boundary, a Dirichlet boundary condition is specified using a vertical tracer restoring profile $\phi_{res}$ which is defined in Section \ref{sec:numexp}. The non-homogeneous boundary conditions for the tracers can be summarized as
\begin{subequations}
  \begin{align}
    \phi &= \phi_{res}(y) &\text{on } \Gamma_{oc},\label{eq:tracer_bc_ocean}\\
    -\bn \SCAL (\kappa_{\phi}\GRAD\phi) &= 0 &\text{on } \Gamma_{b}, \Gamma_{gl}, \Gamma_{atm},\label{eq:tracer_bc_insulation}\\
    -\bn\SCAL(\kappa_{\phi}\GRAD\phi) &= F_{\phi}(\phi,\bu) &\text{on } \Gamma_{ice}, \label{eq:tracer_bc_flux}
  \end{align}
  \label{eq:tracer_bcs}
\end{subequations}
where $\bn$ is the to the boundary outward-pointing unit normal. 

The temperature and salinity are gradually restored to $\phi_{res}$ via the last term of \eqref{eq:tracer} over a horizontal region specified by the decay rate $\zeta(x)$ as in \citep{Asay-Davis_etal2016, Wiskandt}. If the $x$-position of $\Gamma_{oc}$ is denoted by $x_{oc}$, the decay rate is on the form
\begin{equation}
  \label{eq:restoring}
  \zeta(x) = \zeta_0 \max\left(0, \frac{(x - (x_{oc}-x_r))}{x_r}\right),
\end{equation}
where $\zeta_0$ is a frequency (days$^{-1}$) and $x_r$ is the width of the restoring region. That is, the decay rate is a function that is zero for all $x < x_{oc}-x_r$ and linearly increases from $0$ at $x_{oc}-x_r$ to $\zeta_0$ at the open ocean boundary.

The vertical restoring tracer profile also serves as initial conditions $\phi(\bx,t=0)=\phi_{res}(y)$ for the simulations, together with a zero initial velocity.   

% T:
% '(0.2 + (-1.6))/2 + (0.2 - (-1.6))/2 * tanh(5 * pi * (-x[1] + 800.0) / ({ymax} - {ymin}))'
% S:
% '34.5 + 1.0/2 * tanh(5 * pi * (-x[1] + 800.0) / ({ymax} - {ymin}))'

\subsection{A novel conservative formulation}
The momentum equations \eqref{eq:ns_momentum} can be equivalently written as
\begin{equation}
  \label{eq:ns_Boussinesq}
  \p_t \bu  + \bu \SCAL \GRAD \bu  = - \GRAD \tilde{p} + \GRAD \SCAL \l( \nu  \l( \GRAD \bu + (\GRAD \bu )^\top \r) \r)  - \frac{g \delta \rho}{\rho_0} \Hat{\by} ,
\end{equation}
leading to a modified pressure $\tilde{p} = \frac{1}{\rho_0} p + g y$. This modification, aimed at eliminating cancellation errors resulting from $\rho_0 + \delta \rho$, is frequently used in the Boussinesq and ocean modeling literature. Next, we consider the energy, momentum and angular momentum conserving (EMAC) formulation \citep{Charnyi2017} 
\begin{equation}
  \label{eq:ns_EMAC}
  \p_t \bu  + \bu \SCAL \GRAD \bu + (\GRAD \bu) \bu + (\DIV \bu) \bu = - \GRAD \tilde{\tilde{p}} + \GRAD \SCAL \l( \nu  \l( \GRAD \bu + (\GRAD \bu )^\top \r) \r)  - \frac{g \delta \rho}{\rho_0} \Hat{\by} ,
\end{equation}
where $\tilde{\tilde{p}} = \frac{1}{\rho_0} p - \frac{1}{2} \bu \SCAL \bu + g  y$ is a further modified pressure. The EMAC formulation has favorable conservation properties when discretized by a Galerkin method leading to increased stability and accuracy \citep{Olshanskii2020, Charnyi2019, Ingimarson2023}. Similarly, the tracer equations can be written as
\begin{equation}\label{eq:tracer_conservative}
  \partial_t \phi  + \bu \SCAL \GRAD \phi + \frac{1}{2} (\DIV \bu) \l( \phi - \overline{ \phi } \r) =  \GRAD\SCAL (\kappa_{\phi} \GRAD\phi) + \zeta(x) (\phi_{res}(y) - \phi),
\end{equation}
where $\overline{ \phi } := \frac{1}{|\Omega|} \int_\Omega \phi \ud \bx$ is the mean of the tracer field. The formulation in \label{eq:tracer_conservative} was introduced by \cite{lundgren2024fully} and can be shown to be shift-invariant in the sense that
\begin{equation} \label{eq:shift-invariant}
  \begin{split}
  \partial_t \phi  + \bu \SCAL \GRAD \phi + \frac{1}{2} (\DIV \bu) \l( \phi - \overline{ \phi } \r)& =  \GRAD\SCAL (\kappa_{\phi} \GRAD\phi), \\  &\Leftrightarrow  \\
  \partial_t ( \phi + c)  + \bu \SCAL \GRAD ( \phi + c) + \frac{1}{2} (\DIV \bu) \l( ( \phi + c) - \overline{ ( \phi + c) } \r)& =  \GRAD\SCAL (\kappa_{\phi} \GRAD( \phi + c)), \quad \forall c \in \mR.
  \end{split}
\end{equation}
The formulation \eqref{eq:tracer_conservative} also conserves tracer mass $\int_\Omega \phi \ud \bx$ and squared tracer density (tracer energy/variance) $\int_\Omega \phi^2 \ud \bx$ (for $\kappa_\phi = 0$) when discretized by a Galerkin method leading to increased stability and accuracy. Lastly, we introduce a formulation that, in combination with the reformulated tracer equations \eqref{eq:tracer_conservative}, is shift-invariant and tracer mass, potential energy, kinetic energy, squared tracer density, momentum and angular momentum conserving (SI-MEEDMAC):
\begin{equation}
  \label{eq:ns_EMAC_potential}
  \p_t \bu  + \bu \SCAL \GRAD \bu + (\GRAD \bu) \bu + (\DIV \bu) \bu = - \GRAD P + \GRAD \SCAL \l( \nu  \l( \GRAD \bu + (\GRAD \bu )^\top \r) \r) - \frac{g \delta \rho}{\rho_0} \Hat{\by} + \frac{1}{2} \GRAD \l( \frac{\delta \rho  }{\rho_0} g y  \r),
\end{equation}
leading to the modified pressure $ P = \frac{1}{\rho_0} p - \frac{1}{2} \bu \SCAL \bu + g y + \frac{1}{2} \frac{\delta \rho  }{\rho_0} g y $. As will be shown in Section \ref{Sec:stability_estimate}, the main novelty of the new formulation \eqref{eq:ns_EMAC_potential} compared to the EMAC formulation \eqref{eq:ns_EMAC} is that the new formulation also conserves potential energy. Such energy conservation is an attractive feature for ocean models and gravity-driven flows in general. The idea of deriving a formulation that conserves potential energy was inspired by \cite{Shen2024}, who achieved this using a different approach.

Due to the use of pressure shifts, the modified directional-do-nothing boundary condition \eqref{eq:ddn} must be equivalently rewritten as%%
\begin{equation} \label{eq:ddn_modified}
  \begin{split}
    \l( \nu  \l( \GRAD \bu +  \l(\GRAD \bu  \r)^\top  \r) - \l(P + \frac{1}{2} \bu\SCAL \bu - \frac{1}{2} \frac{\delta \rho  }{\rho_0} g y \r) \polI \r)  \bn - \frac{1}{2} ( \bu \SCAL \bn )_- \bu  = - P_{hyd}   \bn , \quad \text{on } \Gamma_{oc},
  \end{split}
\end{equation}
where $P_{hyd}$ is the modified hydrostatic pressure given by
\begin{equation}
  P_{hyd} = C + \int_{y_{min}}^{y}  - \frac{g \delta \rho}{\rho_0} \Hat{\by} \SCAL \hat{\by} dy,
  \end{equation}
where $C$ is chosen so that $p$ is zero on the top of the domain. This leads to the exact same energy estimate as in  \citep{ddn}. 

%

% \begin{remark}
  % The modified pressure can be either $ \frac{1}{2} \frac{\delta \rho  }{\rho_0} = \frac{1}{2} (\alpha_T (T-T_0) - \beta_S (S-S_0))$ or  $+ \frac{1}{2} (\alpha_T T - \beta_S S)$. This is because $ C (\DIV \bu, q) = 0$ for all constants $C$. In code I choose the later option because it is simpler to implement.
% \end{remark}

% I like to look at this to remember what I did with pressure shift.
% \begin{equation}
%   \begin{split}
%     \lukas{ - \delta \rho g \hat{\by} + \frac{1}{2} \GRAD \l( \delta \rho g  y  \r)}   \\
%    \lukas{ + f_y \delta \rho  \hat{\by} - \frac{1}{2} \GRAD \l( \delta \rho f_y  y  \r)   }
%   \end{split}
% \end{equation}

%f_T = g*alpha
% p = int(f_S*S  +f_T*T + f_u_forcing ,[ymin y]) 
% p_shift = - (y).*(1-skew_factor)*f_T.*(T ) - (y).*(1-skew_factor)*f_S.*(S );
% p = p + p_shift;

% p_y = f +1/2 d/dy*g delta_rho/rho_0 *y
% p =  int(f) + 1/2*g delta_rho/rho_0 *y
% p = int(f) - 1/2*g*alpha*T*y
% p = int(-g*delta_rho/rho_0) - 1/2*g*alpha*T*y
% p = int(alpha*T*g) - 1/2*g*alpha*T*y

%%
%  - inner(rho_*f+ f_T*T_  + f_S*S_ + f_man,v)*dx + (1-C_skew_rho_factor)*inner(grad(T_*(f_T[0]*x[0] + f_T[1]*x[1]) + S_*(f_S[0]*x[0] + f_S[1]*x[1]) + rho_*(f[0]*x[0] + f[1]*x[1]) ),v)*dx

\section{Discretization and implicit LES model}
\label{sec:discretization}

\subsection{Galerkin discretization of the Boussinesq equations}
\label{sec:ns_discrete}

The discrete domain $\Omega_h$ is represented by a mesh $\mathcal{T}_h$. The mesh is composed of a finite number of disjoint elements $K$ so that $\Omega_h = \bigcup_{K\in\mathcal{T}_h} K$. We consider function spaces consisting of continuous functions that are piece-wise polynomial
\begin{equation}
  \label{eq:function_space}
  \polP_h^k(\mathcal{T}_h) = \left\{ v\in \mathcal{C}^0(\Omega_h): v|_K\in \polP_k \ \forall K  \in \mathcal{T}_h\right\},
\end{equation}
where $\polP_k$ denotes the space of multivariate polynomials of total degree at most $k$. We define the nodal based mesh size as $h(\bx) \in \calM_h$ using $L_2$-projection with additional smoothing
\begin{equation}
(h,w) + C_{\Delta} \l(|K|^{2/d} \GRAD h, \GRAD w \r) = \l( |K|^{1/d}/k, w \r), \quad \forall w \in \calM_h,
\end{equation}
where $|K|$ is the volume of the cell $K$ and $C_\Delta = \mathcal{O}(1)$ is a user-defined constant.
For the velocity, pressure and tracers we define the following function spaces
\begin{subequations}
  \label{eq:function_spaces}
  \begin{align}
    \bcalV_h  := \l[ \polP^{k+1}_h \r]^d, \quad
    Q_h & := \polP^k_h, \quad
    \calM_h  := \polP^k_h,
  \end{align}
\end{subequations}
which should be taken to include the Dirichlet boundary conditions in \eqref{eq:ocean_bcs} and \eqref{eq:tracer_bcs}.

This choice is compliant with the \emph{inf-sup} or LBB condition \citep{Babuska1973,Brezzi1974}. We also set the polynomial degree of the tracers to be the same as that of the pressure, as this automatically ensures mass conservation of the tracers \citep{lundgren2024fully}. Additionally, the relationship between $Q_h$ and $\calM_h$ has implications for the FEM's ability to maintain hydrostatic balance
\begin{equation}
\frac{\partial p}{\partial y} = -\rho(T,S) g,
\end{equation}
which is crucial in areas away from the flow, for example. Allowing the polynomial degree of the pressure to be one order higher than that of the tracers $T$ and $S$ helps maintain hydrostatic balance, which is particularly important on unstructured meshes \citep{Ford_etal2004}. In \citep{SCOTT2023102178} and \citep{Kimura_etal2013}, hydrostatic balance is achieved by choosing continuous $\polP_2$ elements for pressure and discontinuous $\polP_1$ elements for the tracers and velocity. In contrast, \citep{Ford_etal2004} handles this by solving for the baroclinic pressure gradients separately using a Discontinuous Galerkin method. Instead of lowering the polynomial degree of the tracers, which would result in larger dispersion errors, we propose using a different formulation of the momentum equations \eqref{eq:ns_EMAC_potential}. This approach leads to a sharp energy estimate that includes potential energy, and in Section \ref{Sec:noflow} , we numerically demonstrate that it reduces errors related to the satisfaction of hydrostatic balance.

The semi-discrete finite element approximation of the Boussinesq equations is derived by taking an inner product between the conservative formulation (\eqref{eq:tracer_conservative}, \eqref{eq:ns_EMAC_potential}  and \eqref{eq:ns_mass}) and the test functions from the function spaces \eqref{eq:function_spaces}. Integration by parts is performed on the pressure, viscous and diffusive terms and the boundary conditions \eqref{eq:bc_drag}, \eqref{eq:tracer_bcs}, 
\eqref{eq:ddn_modified} are applied. The FEM reads: Find $(\phi_h, \uh, \Ph)\in \calM_h \CROSS \bcalV_h \CROSS Q_h$ such that:

\begin{subequations} \label{eq:galerkin}
  \begin{equation}\label{eq:tracer_galerkin}
    \begin{split}
      (\p_t \phi_h  + \bu_h \SCAL \GRAD \phi_h + \frac{1}{2} (\DIV \bu_h) \l( \phi_h - \overline{ \phi_h } \r) , w) + (\kappa_{\phi} \GRAD\phi_h , \GRAD w) \\ + ( F_{\phi,h} , w )_{\Gamma_{ice}}  =    (\zeta(x) (\phi_{res}(y) - \phi_h) , w) , \quad \forall w \in \calM_h,
    \end{split}
     \end{equation}
  \begin{equation}
    \begin{split}
    ( \p_t \bu_h + \bu_h \SCAL \GRAD \bu_h + (\GRAD \bu_h) \bu_h + (\DIV \bu_h) \bu_h , \bv ) -( P_h, \DIV \bv) + \l(  \nu  \l( \GRAD \bu_h + (\GRAD \bu_h)^\top \r) , \GRAD \bv \r) & =  \\  - \l(   \frac{g \delta \rho_h}{\rho_0} \Hat{\by} , \bv \r)  -  \frac{1}{2} \l(   \frac{\delta \rho_h  }{\rho_0} g y  , \DIV \bv \r) + \frac{1}{2} \l(  ( \bu_h \SCAL \bn )_- \bu_h , \bv \r)_{\Gamma_{oc}} + \frac{1}{2} \l(  ( \bu_h \SCAL \bn ) \bu_h , \bv \r)_{\Gamma_{oc}} \\   - (P_{hyd} , \bv \SCAL \bn )_{\Gamma_{oc}}  ,  \quad  \forall \bv \in \bcalV_h, \\ 
    \label{eq:ns_galerkin}
  \end{split}
  \end{equation}
  \begin{equation}
    ( \DIV \bu_h , q ) = 0, \quad \forall q \in Q_h.
  \end{equation}
\end{subequations}

\subsubsection{Stabilized method acting as an implicit LES method}
It is well known that the Galerkin discretizations are unstable for convection-dominated problems, \ie $\kappa_{\phi}, \nu \ll h | \bu | $, and therefore the FEM \eqref{eq:galerkin} needs to be stabilized. We propose using a combination of residual-based artificial viscosity \citep{Nazarov_2013,Nazarov_Hoffman_2013,Stiernstrom2021,Lu2019,Marras2015,Dao_Nazarov_2022,lundgren_2024RV,aronson2023stabilized} and high-order dissipation based on Kuzmin et al.'s \citep{kuzmin2023_weno,Kuzmin2020} variant of the variational multiscale (VMS) method \citep{John2006,Braack2006}. 

%The effect of the residual viscosity in this context is twofold: it provides sufficient stabilization while also acting as a large eddy simulation. Since the residual is small in smooth, resolved regions of the domain, high-order accuracy is still maintained. For a comparison between the RV method and the well-known Lilly-Smagorinsky model, we refer to \cite{Marras2015}.   

% The upside of the residual-based artificial viscosity method, or RV method for short, is that it is provably convergent [ref] and high-order accurate if the solution is smooth.

% \lukas{Do short literature survey: We do a blend of low order and high order dissipation. The low order viscosity is resiudal based. High order dissipation is Kuzmins variant of LPS[malte]/VMS[layton]. We also do a new way that is tensor based inspired by ocean community}

We extend the RV method from scalar artificial viscosity to tensor-based artificial viscosity. This extension enables automatic tuning of the artificial viscosity coefficients to maintain stability and accurately represent small-scale structures without requiring user tuning. The tensor-based version of the RV method inherently accounts for the reduced mixing in the vertical direction compared to the horizontal direction, eliminating the need to explicitly introduce separate horizontal and vertical eddy viscosity or diffusivity to address stratification.

The tracer equation is stabilized by adding a blend of tensor artificial viscosity and tensor high-order dissipation
\begin{equation}
  \DIV ( \bbkappa_{\phi,h} \GRAD \phi_h  +   \bbkappa_{\phi,h,vms}( \GRAD \phi_h - \proj \GRAD \phi_h ) ),
\end{equation}
where $\bbkappa_{\phi,h} = \text{diag}( \kappa_{\phi,h,1}, \dots, \kappa_{\phi,h,d} )$, $\bbkappa_{\phi,h,vms} = \text{diag}( \kappa_{\phi,h,vms,1}, \dots, \kappa_{\phi,h,vms,d} )$ are diagonal matrices with scalar mesh-dependent coefficients $\kappa_{\phi,h,i}, \kappa_{\phi,h,vms,i} \geq 0$ to be defined. Following \citep{kuzmin2023_weno}, $\proj \grad \phi_h$ is defined as the following $L_2$ projection of $\grad \phi_h$: Find $\proj \grad \phi_h \in [\calM_h]^d$ such that:
\begin{equation} \label{eq:projection_LPS}
  ( \bbkappa_{h,vms}  \proj \grad \phi_h, \bw) = (\bbkappa_{h,vms} \grad \phi_h, \bw) \quad \forall \bw \in [\calM_h]^d.
\end{equation}

The momentum equations are stabilized in a similar fashion by adding
\begin{equation} \label{eq:momentum_tensor_viscosity}
  \begin{split}
   \GRAD ( \gamma_h \DIV \bu_h ) + \DIV \Biggl (  \sqrt{\bbnu_h}    \l( \GRAD \bu_h + (\GRAD \bu_h)^\top \r)  \sqrt{\bbnu_h}  \Biggr .  \\ \Biggl .  +  \sqrt{ \bbnu_{h,vms} }  \l( \GRAD \bu_h + (\GRAD \bu_h)^\top - \proj \GRAD \bu_h - (\proj \GRAD \bu_h)^\top  \r) \sqrt{ \bbnu_{h,vms} }  \Biggr ),
  \end{split}
\end{equation}
where $\gamma_h\geq 0$ is a grad-div stabilization \citep{Olshanskii_2004} coefficient to be defined and $\bbnu_{h} = \text{diag}( \nu_{h,1}, \dots, \nu_{h,d} )$, $\bbnu_{h,vms} = \text{diag}( \nu_{h,vms,1}, \dots, \nu_{h,vms,d} )$ are diagonal matrices with scalar mesh-dependent coefficients $\nu_{h,i}, \nu_{\phi,h,vms,i} \geq 0$ to be defined. We note that $ \bbnu_{h,vms} = \sqrt{\bbnu_{h,vms}} \sqrt{\bbnu_{h,vms}}$ with $\sqrt{\bbnu_{h,vms}} = \text{diag} \l( \sqrt{\nu_{h,vms,1}} , \dots,  \sqrt{\nu_{h,vms,d}} \r)$. $\proj \grad \bu_h$ is defined as the following $L_2$ projection of $\grad \bu_h$: Find $\proj \grad \bu_h \in [\bcalV_h]^d $ such that:
\begin{equation} \label{eq:projection_LPS_tensor}
  \l( \sqrt{\bbnu_{h,vms}} \proj \grad \bu_h \sqrt{\bbnu_{h,vms}}, \polV \r) = \l( \sqrt{\bbnu_{h,vms}} \grad \bu_h \sqrt{\bbnu_{h,vms}}, \polV \r) \quad \forall \polV \in [\bcalV_h]^d,
\end{equation}
where $[\bcalV_h]^d$ is a tensor space.

The stabilized FEM then reads: Find $(\phi_h, \uh, \Ph)\in \calM_h \CROSS \bcalV_h \CROSS Q_h$ such that:
\begin{subequations} \label{eq:stabilized fem}
  \begin{equation}\label{eq:tracer_stabilized}
    \begin{split}
      \l( \p_t \phi_h  + \bu_h \SCAL \GRAD \phi_h + \frac{1}{2} (\DIV \bu_h) \l( \phi_h - \overline{ \phi_h } \r) , w \r) + \l( (\kappa_{\phi} \polI + \bbkappa_{\phi,h} ) \GRAD\phi_h + \bbkappa_{\phi,h, vms} ( \GRAD \phi_h - \proj \GRAD \phi_h ) , \GRAD w \r) \\ + ( F_{\phi,h} , w )_{\Gamma_{ice}}  =    (\zeta(x) (\phi_{res}(y) - \phi_h) , w) , \quad \forall w \in \calM_h,
    \end{split}
     \end{equation}
  \begin{equation} \label{eq:momentum_stabilized}
    \begin{split}
    ( \p_t \bu_h + \bu_h \SCAL \GRAD \bu_h + (\GRAD \bu_h) \bu_h + (\DIV \bu_h) \bu_h , \bv ) -( P_h, \DIV \bv) + (\gamma_h \DIV \bu_h, \DIV \bv) \\ + \Biggl (   \nu    \l( \GRAD \bu_h + (\GRAD \bu_h)^\top \r) +   \sqrt{\bbnu_h}  \l( \GRAD \bu_h + (\GRAD \bu_h)^\top \r)  \sqrt{\bbnu_h} \Biggr .  \\ \Biggl .  +  \sqrt{ \bbnu_{h,vms} }  \l( \GRAD \bu_h + (\GRAD \bu_h)^\top - \proj \GRAD \bu_h - (\proj \GRAD \bu_h)^\top  \r) \sqrt{ \bbnu_{h,vms} } , \GRAD \bv \Biggr ) & =  \\  -  \l(   \frac{g \delta \rho_h}{\rho_0} \Hat{\by} , \bv \r)  -  \frac{1}{2} \l(   \frac{\delta \rho_h  }{\rho_0} g y  , \DIV \bv \r) + \frac{1}{2} \l(  ( \bu_h \SCAL \bn )_- \bu_h , \bv \r)_{\Gamma_{oc}} + \frac{1}{2} \l(  ( \bu_h \SCAL \bn ) \bu_h , \bv \r)_{\Gamma_{oc}} \\   - (P_{hyd} , \bv \SCAL \bn )_{\Gamma_{oc}}   ,  \quad  \forall \bv \in \bcalV_h, 
  \end{split}
  \end{equation}
  \begin{equation} \label{eq:divergence_free_stabilized}
    ( \DIV \bu_h , q ) = 0, \quad \forall q \in Q_h.
  \end{equation}
\end{subequations}%%
Provided that $\bbkappa_{\phi,h}, \bbnu_h$ are constructed to be sufficiently large, \eqref{eq:stabilized fem} will be stable whereas $\gamma_h$ provides additional divergence cleaning and $\bbkappa_{\phi,h,vms}, \bbnu_{h,vms}$ provides additional high-order dissipation to suppress small-scale oscillations. We set
\begin{equation}
  \begin{split}
    \kappa_{\phi,h, i} &= \ind_{\phi,h} C_{\text{max}} h |\bu_{h,i}|, \\ 
    \kappa_{\phi,h, vms, i} &= (1 -\ind_{\phi,h} ) C_{\text{max,vms}} h |\bu_{h,i}|, \\ 
    \nu_{h,i} &=  \ind_{\bu,h} C_{\text{max}} h |\bu_{h,i}|, \\
    \nu_{h,vms,i} &=  (1 - \ind_{\bu,h}) C_{\text{max,vms}} h |\bu_{h,i}|, \\
    \gamma_{h} &=   C_{\text{max}} h |\bu_h|,
  \end{split}
\end{equation}
where $C_\text{max}, C_\text{max,vms}$ are user-defined parameters determining the amount of first-order viscosity and high-order dissipation used and $\ind_{\phi,h}, \ind_{\bu,h}  \in  \calM_h  \cap [0, 1] $ are discontinuity indicators for $\phi_h$ and $\bu_h$ that are close to 0 in smooth regions and close to 1 in underresolved regions. For stratified flow, the stabilization will generally be lower in the vertical direction due to the typically lower vertical velocities. This is comparable to the reduced vertical eddy viscosity/diffusivity used in standard eddy-viscosity models, such as those described in \cite{Wiskandt}.

 The algorithm used to compute the indicators $\ind_h$ is quite technical and interested readers are referred to \ref{appendix:discontinuity_indicator} for further details. The technique (with $C_{\text{max,vms}} = 0$) was recently successfully applied to variable density incompressible flow in \citet{lundgren_2024RV} where it was demonstrated that the method is robust for large density ratios while still being high-order accurate in smooth regions. Recently, \cite{kuzmin2023_weno} proposed to stabilize convection-dominated FEM using a blend of low-order stabilization scaled with $\bbkappa_h, \bbnu_h$ and high-order stabilization scaled with $\bbkappa_{h,vms}, \bbnu_{h,vms}$. The difference between our approach and that of \citep{kuzmin2023_weno} lies in the construction of our discontinuity indicator, $\ind_h$, which is based on the PDE residual and utilizes tensor viscosity. In contrast, \citep{kuzmin2023_weno} employs a different method to construct the indicator and uses scalar viscosity. We note that Kuzmin et al.'s high-order dissipation operator \citep{kuzmin2023_weno, Kuzmin2020}, scaled with $C_\text{max,vms}$, provides further suppression of small-scale, high-frequency oscillations and enhances accuracy for smooth solutions.

% referera till blending low order with high order??

% \begin{remark}
 As detailed in \ref{Sec:symmetric_stabilization}, by following similar steps as performed by \cite[Sec 1]{John2006}, it is possible to show that \eqref{eq:tracer_stabilized}-\eqref{eq:momentum_stabilized} is equivalent to
  \begin{subequations} \label{eq:stabilized fem symmetric}
    \begin{equation}\label{eq:tracer_stabilized_symmetric}
      \begin{split}
        (\p_t \phi_h  + \bu_h \SCAL \GRAD \phi_h + \frac{1}{2} (\DIV \bu_h) \l( \phi_h - \overline{ \phi_h } \r) , w) + ( (\kappa_{\phi} \polI + \bbkappa_{\phi,h} ) \GRAD\phi_h , \GRAD w) \\ + ( \bbkappa_{\phi,h, vms}  ( \GRAD \phi_h - \proj \GRAD \phi_h ) , \GRAD w - \proj \GRAD w)  + ( F_{\phi,h} , w )_{\Gamma_{ice}}  =    (\zeta(x) (\phi_{res}(y) - \phi_h) , w) , \quad \forall w \in \calM_h  ,
      \end{split}
       \end{equation}
       \begin{equation} \label{eq:momentum_stabilized_symmetric}
        \begin{split}
        ( \p_t \bu_h + \bu_h \SCAL \GRAD \bu_h + (\GRAD \bu_h) \bu_h + (\DIV \bu_h) \bu_h , \bv ) -( P_h, \DIV \bv) + (\gamma_h \DIV \bu_h, \DIV \bv) \\ + \frac{1}{2} \l(   \nu    \l( \GRAD \bu_h + (\GRAD \bu_h)^\top \r) +   \sqrt{\bbnu_h}  \l( \GRAD \bu_h + (\GRAD \bu_h)^\top \r)  \sqrt{\bbnu_h}, \GRAD \bv + \GRAD \bv^\top \r)   \\   + \frac{1}{2} \l( \sqrt{ \bbnu_{h,vms} }  \l( \GRAD \bu_h + (\GRAD \bu_h)^\top - \proj \GRAD \bu_h - (\proj \GRAD \bu_h)^\top  \r)  \sqrt{ \bbnu_{h,vms} } , \GRAD \bv + \GRAD \bv^\top - \proj \GRAD \bv - \proj \GRAD \bv^\top \r) & =  \\   - \l(   \frac{g \delta \rho_h}{\rho_0} \Hat{\by} , \bv \r)  -  \frac{1}{2} \l(   \frac{\delta \rho_h  }{\rho_0} g y  , \DIV \bv \r) + \frac{1}{2} \l(  ( \bu_h \SCAL \bn )_- \bu_h , \bv \r)_{\Gamma_{oc}} + \frac{1}{2} \l(  ( \bu_h \SCAL \bn ) \bu_h , \bv \r)_{\Gamma_{oc}} \\   - (P_{hyd} , \bv \SCAL \bn )_{\Gamma_{oc}}  ,  \quad  \forall \bv \in \bcalV_h.
      \end{split}
      \end{equation}
  \end{subequations}%%%

% \end{remark}

\begin{remark}
We note that the viscosity operators $ \bbnu \GRAD \bu $ \citep{Kimura_etal2013} and $ \bbnu \l( \GRAD \bu + (\GRAD \bu)^\top \r)$ \citep{SCOTT2023102178} are currently being used as eddy viscosity, acting as a sub-grid scale model in ocean modeling. The downside with these operators is that they are not symmetric and, therefore, do not conserve angular momentum. Additionally, it is unclear to the authors whether these operators are provably kinetic energy dissipative. The advantage of using $ \sqrt{ \bbnu } \l( \GRAD \bu + (\GRAD \bu)^\top \r) \sqrt{ \bbnu } $ in \eqref{eq:momentum_tensor_viscosity} is that it guarantees both angular momentum conservation and kinetic energy dissipation for any choice of viscosity coefficients inside $\bbnu$, as demonstrated in the next section.
\end{remark}

\section{Stability and conservation estimates} \label{Sec:stability_estimate}

In this section, we show that the stabilized FEM satisfies sharp stability and conservation estimates. To simplify the analysis, we set the restoring term to zero, i.e., $\zeta(x) = 0$. We also assume homogeneous Neumann boundary conditions for the tracers and no-slip conditions for velocity on $\p \Omega$
\begin{equation}
  \begin{split}
    \bn \SCAL( \kappa_\phi \GRAD  \phi) = 0 , \quad \p \Omega, \\
    \bu = 0, \quad \p \Omega.
  \end{split}
\end{equation}
This requires modifying the pressure space $Q_h$ to the zero average space $Q_{0,h} = \{ q \in Q_h : \int_\Omega q \ud \bx = 0 \}$.

The momentum equations \eqref{eq:ns_Boussinesq}, combined with the tracer equations \eqref{eq:tracer}, can be shown \citep[Equation (9)-(10)]{winters_1995} to satisfy the following kinetic- and potential energy evolution equations
\begin{align}
  \p_t  \frac{1}{2} \| \bu \|^2     + \frac{1}{2} \l \|  \nu^\frac{1}{2}  \l( \GRAD \bu + (\GRAD \bu)^\top \r)  \r \|^2    =  (- \frac{g \delta \rho}{\rho_0} \Hat{\by},\bu), \\
  \p_t  E_p    = -(- \frac{g \delta \rho}{\rho_0} \Hat{\by},\bu)   +  ( \alpha_T g  T \GRAD y  ,    \DIV ( \kappa_T \polI )  ) - ( \beta_S g  S_h \GRAD y  ,    \DIV  (\kappa_S \polI)  ) \\ -  ( \kappa_T g \alpha_T \bn \SCAL  \by , T  )_{\p \Omega } +  ( \kappa_S g \beta_S \bn \SCAL  \by , S_h  )_{\p \Omega }  \nonumber  ,
\end{align}
with kinetic energy defined as $\frac{1}{2} \| \bu \|^2$ and potential energy defined as $E_p = (g (y - \overline{y})    ,  -\alpha_T  T + \beta_S  S)$. Note that if $\kappa_{T,S}$ is constant, then $(\alpha_T g T \GRAD y, \DIV (\kappa_T \polI)) - (\beta_S g S_h \GRAD y, \DIV (\kappa_S \polI)) = 0$, indicating that thermal- and salinity diffusion affect potential energy only through the boundary terms. These equations can be combined into
  \begin{equation} \label{eq:potential_energy_model}
    \begin{split}
      \p_t \l( \frac{1}{2} \| \bu \|^2 +  E_p \r)   + \frac{1}{2} \l \|  \nu^\frac{1}{2}  \l( \GRAD \bu + (\GRAD \bu)^\top \r)  \r \|^2  \\  =  ( \alpha_T g  T \GRAD y  ,    \DIV ( \kappa_T \polI )  ) - ( \beta_S g  S_h \GRAD y  ,    \DIV  (\kappa_S \polI)  ) -  ( \kappa_T g \alpha_T \bn \SCAL  \by , T  )_{\p \Omega } +  ( \kappa_S g \beta_S \bn \SCAL  \by , S_h  )_{\p \Omega }    ,
    \end{split}
  \end{equation}
which shows the balance between these energies. 

Next, we show that our stabilized FEM satisfies a discrete counterpart of \eqref{eq:potential_energy_model} with additional artificial diffusion from $\bbkappa_h, \bbnu_h, \bbnu_{h,vms}, \gamma_h$. The novelty of our proposed FEM is that it can satisfy the full energy balance without strongly imposing the divergence-free constraint while also conserving angular momentum. The key ingredients enabling this are our new formulation \eqref{eq:ns_EMAC_potential} of the Boussinesq approximation and our new symmetric viscous tensor $\sqrt{\bbnu_h} \l( \GRAD \bu_h + (\GRAD \bu_h)^\top \r) \sqrt{\bbnu_h}$.

\begin{theorem} \label{proposition_potential}
 The stabilized FEM \eqref{eq:stabilized fem symmetric}, with no-slip boundary conditions for $\bu_h$ and homogeneous Neumann boundary conditions for the tracers, satisfies the following stability estimate
  \begin{equation}
    \begin{split}
      \p_t \l( \frac{1}{2} \| \bu_h \|^2 +  (g (y - \overline{y})    ,  -\alpha_T  T_h + \beta_S S_h) \r)  \\
       + \frac{1}{2} \l \|  \nu^\frac{1}{2}  \l( \GRAD \bu_h + (\GRAD \bu_h)^\top \r)   \r \|^2  
       + \frac{1}{2} \l \|   \bbnu_h^\frac{1}{4}  \l( \GRAD \bu_h + (\GRAD \bu_h)^\top \r) \bbnu_h^\frac{1}{4}  \r \|^2  + \l \| \sqrt{\gamma_h} \DIV \bu_h \r \|^2
       \\ + \frac{1}{2} \l \|  \bbnu_{h,vms}^\frac{1}{4}  \l( \GRAD \bu_h + (\GRAD \bu_h)^\top - \proj \GRAD \bu_h - ( \proj \GRAD \bu_h)^\top \r) \bbnu_{h,vms}^\frac{1}{4} \r \|^2 \\ =  - ( \DIV(  (\kappa_T \polI + \bbkappa_{T,h} )  \hat{\by} ) ,     g \alpha_T  T_h) + ( \DIV (  ( \kappa_S \polI + \bbkappa_{S,h} ) \hat{\by} ),    g \beta_S S_h    ) \\  +  ( \bn \SCAL ( (\kappa_T \polI + \bbkappa_{T,h})\hat{\by} ), g \alpha_T T_h )_{\p \Omega} - ( \bn \SCAL ( (\kappa_S \polI + \bbkappa_{S,h})\hat{\by} ), g \beta_S S_h )_{\p \Omega} .  
    \end{split}
  \end{equation}
\end{theorem}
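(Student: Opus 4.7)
The plan is to derive a discrete kinetic-energy identity by testing \eqref{eq:momentum_stabilized_symmetric} with $\bv=\uh$, a discrete potential-energy identity by testing each tracer equation in \eqref{eq:tracer_stabilized_symmetric} with the linear functions $w_T=-\alpha_T g(y-\overline{y})$ and $w_S=\beta_S g(y-\overline{y})$, and then to add the two. The choice of $w_{T,S}$ is forced by the observation $\partial_t E_p=(\partial_t T_h,w_T)+(\partial_t S_h,w_S)$, and these functions lie in $\calM_h$ as soon as $k\ge 1$. The factor $\tfrac12$ attached to both the new pressure-gradient correction in \eqref{eq:ns_EMAC_potential} and the divergence-symmetrization term in \eqref{eq:tracer_conservative} is what will make the buoyancy contributions on the two sides match.

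In the momentum identity the EMAC convective contribution vanishes: \eqref{eq:IBP2} gives $(\uh\SCAL\GRAD\uh,\uh)=-\tfrac12((\DIV\uh)\uh,\uh)$; the identity $(\GRAD\uh)\uh=\tfrac12\GRAD|\uh|^2$ followed by one integration by parts gives $((\GRAD\uh)\uh,\uh)=-\tfrac12((\DIV\uh)\uh,\uh)$; and the third term $((\DIV\uh)\uh,\uh)$ exactly cancels the two halves. The pressure term drops by testing \eqref{eq:divergence_free_stabilized} with $q=P_h$. The symmetric viscous, tensor and VMS blocks yield the squared-norm dissipation stated, using the diagonal-matrix identity $(\sqrt{\bbnu_h}A\sqrt{\bbnu_h}):A=\|\bbnu_h^{1/4}A\bbnu_h^{1/4}\|^2$ for the symmetric strain $A=\GRAD\uh+(\GRAD\uh)^\top$ together with the $L_2$-orthogonality of $\proj$. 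Call $B$ the remaining buoyancy-plus-pressure-shift contribution: the constants $T_0,S_0$ drop out because $(\hat{\by},\uh)=-(y,\DIV\uh)=0$ (from $y-\overline{y}\in Q_h$ and $(1,\DIV\uh)=0$ by no-slip), and one IBP of $(yT_h,\DIV\uh)=-(T_h\hat{\by},\uh)-(\uh\SCAL\GRAD T_h,y)$ (and the analogous one for $S_h$) reduces $B$ to $\tfrac12 g\alpha_T\bigl[(T_h\hat{\by},\uh)-(\uh\SCAL\GRAD T_h,y)\bigr]-\tfrac12 g\beta_S\bigl[(S_h\hat{\by},\uh)-(\uh\SCAL\GRAD S_h,y)\bigr]$.

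For the tracer identities the time-derivative terms sum to $\partial_t E_p$, and the VMS term vanishes identically because $\GRAD w_{T,S}$ is a constant vector in $[\calM_h]^d$ equal to its own $L_2$-projection. Applying \eqref{eq:advection_IBP1} to $\bigl(\uh\SCAL\GRAD\phi_h+\tfrac12(\DIV\uh)(\phi_h-\overline{\phi_h}),w\bigr)$, observing that $\overline{\phi_h}(\DIV\uh,y-\overline{y})=0$ by $y-\overline{y}\in Q_h$ and that $(\uh\SCAL\GRAD\phi_h,1)=-(\DIV\uh,\phi_h)=0$ by testing \eqref{eq:divergence_free_stabilized} with $q=\phi_h\in Q_h=\calM_h$, and then performing a last IBP on $((\DIV\uh)T_h,y-\overline{y})$, the advective-divergence contributions from the two tracers reproduce exactly $-B$, cancelling $+B$ on the momentum side. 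The remaining diffusive contributions $g\alpha_T((\kappa_T\polI+\bbkappa_{T,h})\GRAD T_h,\hat{\by})$ and $-g\beta_S((\kappa_S\polI+\bbkappa_{S,h})\GRAD S_h,\hat{\by})$ become, after a further IBP in the $y$-direction, the two volume and two boundary terms on the right-hand side of the statement. Adding the momentum and tracer identities then gives the claim. The main obstacle is the delicate algebraic identity showing that the tracer advective-divergence contributions collectively reproduce $-B$: it relies simultaneously on $y-\overline{y}\in Q_h$ and $\calM_h\subseteq Q_h$ built into the discrete spaces of Section~3, on the precise coefficient $\tfrac12$ that ties \eqref{eq:ns_EMAC_potential} to \eqref{eq:tracer_conservative}, and on the symmetric form of the viscous tensor $\sqrt{\bbnu_h}(\cdot)\sqrt{\bbnu_h}$; without any of these ingredients, uncancelled $\DIV\uh$ residuals survive in the energy balance.
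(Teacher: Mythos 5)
Your proposal is correct and follows essentially the same route as the paper's proof: test the momentum equation with $\bv=\uh$ and the pressure equation with $q=P_h$ to kill the EMAC convective and pressure terms, convert the symmetric viscous/VMS blocks to squared norms via the cyclic trace identity, test the tracer equations with multiples of $g(y-\overline{y})\in\calM_h$ (whose gradient is constant, so the VMS term vanishes), and match the buoyancy terms using $y-\overline{y}\in Q_{h,0}$ and the factor $\tfrac12$ linking \eqref{eq:ns_EMAC_potential} to \eqref{eq:tracer_conservative}. The only small inaccuracy is your closing remark that the buoyancy cancellation relies on the symmetric form of $\sqrt{\bbnu_h}(\cdot)\sqrt{\bbnu_h}$ — that structure is needed for the sign-definiteness of the dissipation terms, not for the cancellation itself.
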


\begin{proof}
  Setting $\bv = \bu_h, q = P_h$ inside the momentum equations \eqref{eq:momentum_stabilized_symmetric} and divergence-free constraint \eqref{eq:divergence_free_stabilized}, and adding them gives

  \begin{equation} \label{eq:step0_energy}
    \begin{split}
      \p_t \frac{1}{2} \| \bu_h \|^2 + 2 b(\bu_h,\bu_h,\bu_h) + ((\DIV \bu_h) \bu_h, \bu_h) + \l \| \sqrt{\gamma_h} \DIV \bu_h \r \|^2 \\ 
      + \frac{1}{2} \Biggl (   \nu    \l( \GRAD \bu_h + (\GRAD \bu_h)^\top \r)   + \sqrt{\bbnu_h}  \l( \GRAD \bu_h + (\GRAD \bu_h)^\top \r) \sqrt{ \bbnu_h } , \GRAD \bu_h + (\GRAD \bu_h)^\top \Biggr )  \\ 
      + \frac{1}{2} \Biggl ( \sqrt{ \bbnu_{h,vms}}  \l( \GRAD \bu_h + (\GRAD \bu_h)^\top - \proj \GRAD \bu_h - ( \proj \GRAD \bu_h)^\top \r) \sqrt{ \bbnu_{h,vms}}  \Biggr .\\ \Biggl . , \GRAD \bu_h + (\GRAD \bu_h)^\top - \proj \GRAD \bu_h - ( \proj \GRAD \bu_h)^\top \Biggr ) \\ 
      = \l( - \frac{g \delta \rho_h}{\rho_0} \Hat{\by}, \bu_h \r) - \frac{1}{2} \l(  \frac{\delta \rho_h  }{\rho_0} g y  , \DIV \bu_h \r),  
    \end{split}
  \end{equation}
where $ b(\bu_h, \bu_h, \bu_h) = ( \bu_h \SCAL \GRAD \bu_h, \bu_h ) = ( (\GRAD \bu_h) \bu_h , \bu_h )$ follows from the definition of the trilinear form \eqref{eq:trilinear definition}.

Next, using the cyclic property of the trace operator \eqref{eq:cyclic}, one can show that
\begin{equation} \label{eq:sym_dissipation}
  \begin{split}
  &\l(  \sqrt{ \bbnu_h }  \l( \GRAD \bu_h + \l( \GRAD \bu_h \r)^\top \r)  \sqrt{\bbnu_h} , \GRAD \bu_h + (\GRAD \bu_h)^\top \r)  \\
 =& \text{tr} \l(   \sqrt{ \bbnu_h }  \l( \GRAD \bu_h + \l( \GRAD \bu_h \r)^\top \r)  \sqrt{\bbnu_h} \l( \GRAD \bu_h + (\GRAD \bu_h)^\top \r) \r)  \\
 =& \text{tr} \l(    \bbnu_h^{\frac{1}{4}}    \l( \GRAD \bu_h + \l( \GRAD \bu_h \r)^\top \r) \bbnu_h^{\frac{1}{4}} \bbnu_h^{\frac{1}{4}} \l( \GRAD \bu_h + (\GRAD \bu_h)^\top \r) \bbnu_h^{\frac{1}{4}} \r)  \\
  = & \l(   \bbnu_h^{\frac{1}{4}}  \l( \GRAD \bu_h + \l( \GRAD \bu_h \r)^\top \r) \bbnu_h^{\frac{1}{4}},  \bbnu_h^{\frac{1}{4}} \l( \GRAD \bu_h + (\GRAD \bu_h)^\top \r) \bbnu_h^{\frac{1}{4}} \r) \\
   =& \l \|   \bbnu_h^{\frac{1}{4}}  \l( \GRAD \bu_h + \l( \GRAD \bu_h \r)^\top \r) \bbnu_h^{\frac{1}{4}} \r \|^2 .
  \end{split}
  \end{equation}

Applying \eqref{eq:sym_dissipation} on \eqref{eq:step0_energy} and performing similar steps on the other viscosity terms gives
\begin{equation} \label{eq:step1_energy}
  \begin{split}
    \p_t \frac{1}{2} \| \bu_h \|^2 + 2 b(\bu_h,\bu_h,\bu_h) + ((\DIV \bu_h) \bu_h, \bu_h) + \l \| \sqrt{\gamma_h} \DIV \bu_h \r \|^2 \\
    + \frac{1}{2} \l \|  \nu^\frac{1}{2}  \l( \GRAD \bu_h + (\GRAD \bu_h)^\top \r)   \r \|^2  
    + \frac{1}{2} \l \|  \bbnu_h^\frac{1}{4}  \l( \GRAD \bu_h + (\GRAD \bu_h)^\top \r) \bbnu_h^\frac{1}{4}  \r \|^2  \\ + \frac{1}{2} \l \|  \bbnu_{h,vms}^\frac{1}{4}  \l( \GRAD \bu_h + (\GRAD \bu_h)^\top - \proj \GRAD \bu_h - ( \proj \GRAD \bu_h)^\top \r) \bbnu_{h,vms}^\frac{1}{4} \r \|^2 \\
    = \l( - \frac{g \delta \rho_h}{\rho_0} \Hat{\by}, \bu_h \r) - \frac{1}{2} \l(  \frac{\delta \rho_h  }{\rho_0} g y  , \DIV \bu_h \r).
  \end{split}
\end{equation}

Using identity \eqref{eq:IBP2} and doing integration by parts on the last term in \eqref{eq:step1_energy} two times gives
  \begin{equation} \label{eq:proof_step0}
    \begin{split}
      \p_t \| \bu_h \|^2 +  \l \| \sqrt{\gamma_h} \DIV \bu_h \r \|^2   + \frac{1}{2} \l \|  \nu^\frac{1}{2}  \l( \GRAD \bu_h + (\GRAD \bu_h)^\top \r)   \r \|^2  
      + \frac{1}{2} \l \|   \bbnu_h^\frac{1}{4}  \l( \GRAD \bu_h + (\GRAD \bu_h)^\top \r) \bbnu_h^\frac{1}{4}  \r \|^2  \\ + \frac{1}{2} \l \|  \bbnu_{h,vms}^\frac{1}{4}  \l( \GRAD \bu_h + (\GRAD \bu_h)^\top - \proj \GRAD \bu_h - ( \proj \GRAD \bu_h)^\top \r) \bbnu_{h,vms}^\frac{1}{4} \r \|^2 = \l(- \frac{g \delta \rho_h}{\rho_0} \Hat{\by}, \bu_h \r)  - \frac{1}{2} \l(  \frac{\delta \rho_h  }{\rho_0} g (y - \overline{y})  , \DIV \bu_h \r),  %   + \frac{1}{2} \l( \bn \SCAL \bu_h ,  \frac{\delta \rho_h  }{\rho_0} g (y - \overline{y})   \r)_{\p \Omega}         \\
    \end{split}
  \end{equation}
  where we have also used that $\GRAD \overline{y} = 0$ since $\overline{y}$ is a constant.

The gravity term can, using integration by parts and $\hat{\by} = \GRAD (y - \overline{y})$, be written as %For now we assume $ \p_t \rho + \bu_h \SCAL \GRAD \rho + \frac{1}{2} (\DIV \bu_h) (\rho - \overline{\rho} ) = 0$.

  \begin{equation} \label{eq:gravity_step0}
    \begin{split}
      \l(- \frac{g \delta \rho_h}{\rho_0} \Hat{\by}, \bu_h \r)  - \frac{1}{2} \l(  \frac{\delta \rho_h }{\rho_0} g (y - \overline{y})  , \DIV \bu_h \r)   &= - \l( g \frac{\delta \rho_h}{\rho_0} \hat{\by} , \bu_h \r)  - \frac{1}{2} \l(  \frac{\delta \rho_h  }{\rho_0} g (y - \overline{y})  , \DIV \bu_h \r)         \\ &=  - \l(  \GRAD (y - \overline{y}) , \bu_h g \frac{\delta \rho_h}{\rho_0} \r)  - \frac{1}{2} \l(  \frac{\delta \rho_h  }{\rho_0} g (y - \overline{y})  , \DIV \bu_h \r)     \\ 
      &=    \l( \frac{g (y - \overline{y})}{\rho_0}  , \DIV  \l( \bu_h \delta \rho_h \r)   \r)  - \frac{1}{2} \l(  \frac{\delta \rho_h  }{\rho_0} g (y - \overline{y})  , \DIV \bu_h \r) %- \l( (\bn \SCAL \bu_h) (y -  \overline{y}  ), g \frac{\delta \rho_h  }{\rho_0} \r)_{\p \Omega}
      \\ &=   \l( \frac{g (y - \overline{y})}{\rho_0}  ,  \bu_h \SCAL \GRAD \delta \rho_h + (\DIV \bu_h) \delta \rho_h    \r)  - \frac{1}{2} \l(  \frac{  g (y - \overline{y}) }{\rho_0}  , (\DIV \bu_h ) \delta \rho_h \r) \\
      \\ &=   \l( \frac{g (y - \overline{y})}{\rho_0}  ,  \bu_h \SCAL \GRAD \delta \rho_h + \frac{1}{2} (\DIV \bu_h) \delta \rho_h    \r).
    \end{split}
  \end{equation}
Since $y - \overline{y}$ is a piecewise linear function with zero average $ (y - \overline{y}) C \in Q_{h,0} $ for any constant $C$ this means that $(C (y - \overline{y}) , \DIV \bu_h) = 0$. Therefore, \eqref{eq:gravity_step0} is equivalent to
      \begin{equation} \label{eq:gravity_step1}
\begin{split}
  &  \l(- \frac{g \delta \rho_h}{\rho_0} \Hat{\by}, \bu_h \r)  - \frac{1}{2} \l(  \frac{\delta \rho_h }{\rho_0} g (y - \overline{y})  , \DIV \bu_h \r) \\
      & =\l( \frac{g (y - \overline{y})}{\rho_0}  ,  \bu_h \SCAL \GRAD \delta \rho_h + \frac{1}{2} (\DIV \bu_h) (\delta \rho_h - \overline{\delta \rho_h})    \r) \\
     &= \l( g(y - \overline{y}), - \alpha_T ( \bu_h \SCAL \GRAD T_h + \frac{1}{2} (\DIV \bu_h) (T_h - \overline{T_h}) )  + \beta_S ( \bu_h \SCAL \GRAD S_h + \frac{1}{2} (\DIV \bu_h) (S_h - \overline{S_h}) ) \r), \end{split}
  \end{equation}
because $\overline{\delta \rho_h}$ is a constant.
Next, by setting $\w = g(y - \overline{y})$ in the tracer update \eqref{eq:tracer_stabilized_symmetric} for salinity and temperature, we further rewrite \eqref{eq:gravity_step1} as
\begin{equation} \label{eq:gravity_step2}
\begin{split}
    \l(- \frac{g \delta \rho_h}{\rho_0} \Hat{\by}, \bu_h \r)  - \frac{1}{2} \l(  \frac{\delta \rho_h }{\rho_0} g (y - \overline{y})  , \DIV \bu_h \r) \\
% = \l( g(y - \overline{y}), - \alpha_T ( \bu_h \SCAL \GRAD T_h + \frac{1}{2} (\DIV \bu_h) (T_h - \overline{T_h}) )  + \beta_S ( \bu_h \SCAL \GRAD S_h + \frac{1}{2} (\DIV \bu_h) (S_h - \overline{S_h}) ) \r) \\
=- ( g (y - \overline{y})    ,  -\alpha_T  \p_t T_h   + \beta_S \p_t S_h ) - ( \GRAD y , -  g \alpha_T  (\kappa_T \polI + \bbkappa_{T,h} )  \GRAD T_h + g \beta_S ( \kappa_S \polI + \bbkappa_{S,h} )  \GRAD S_h    )  \\ - ( \GRAD y - \proj \GRAD y , - g \alpha_T   \bbkappa_{T,h,vms}  (\GRAD T_h - \proj \GRAD T_h) + g \beta_S   \bbkappa_{S,h,vms}   (\GRAD S_h - \proj \GRAD S_h)    ) \\
= -( g (y - \overline{y})    ,  -\alpha_T  \p_t T_h   + \beta_S \p_t S_h ) - ( \GRAD y , - g \alpha_T  (\kappa_T \polI + \bbkappa_{T,h} )  \GRAD T_h + g \beta_S ( \kappa_S \polI + \bbkappa_{S,h} )  \GRAD S_h    ),
\end{split}
  \end{equation}
where $\proj \GRAD y = \GRAD y$ was used in the last step. This is true because the projection of a constant is the same as the constant. Finally, by applying integration by parts repeatedly, we have that
      \begin{equation} \label{eq:last_step}
        \begin{split}
          &  \l(- \frac{g \delta \rho_h}{\rho_0} \Hat{\by}, \bu_h \r)  - \frac{1}{2} \l(  \frac{\delta \rho_h }{\rho_0} g (y - \overline{y})  , \DIV \bu_h \r) \\
      % & = -( g (y - \overline{y})    ,  -\alpha_T  \p_t T_h   + \beta_S \p_t S_h ) - ( \GRAD y , -  g\alpha_T  (\kappa_T \polI + \bbkappa_{T,h} )  \GRAD T_h + g\beta_S ( \kappa_S \polI + \bbkappa_{S,h} )  \GRAD S_h    ) \\
      & = -( g (y - \overline{y})    ,  -\alpha_T  \p_t T_h   + \beta_S \p_t S_h ) - ( \hat{\by} , -  g\alpha_T  (\kappa_T \polI + \bbkappa_{T,h} )  \GRAD T_h + g\beta_S ( \kappa_S \polI + \bbkappa_{S,h} )  \GRAD S_h    ) \\
      & = -( g (y - \overline{y})    ,  -\alpha_T  \p_t T_h   + \beta_S \p_t S_h ) + (g \alpha_T (\kappa_T \polI + \bbkappa_{T,h} ) \hat{\by} ,      \GRAD T_h) - ( g\beta_S ( \kappa_S \polI + \bbkappa_{S,h} )  \hat{\by},   \GRAD S_h    ) \\
      & = -( g (y - \overline{y})    ,  -\alpha_T  \p_t T_h   + \beta_S \p_t S_h ) - ( \DIV(  (\kappa_T \polI + \bbkappa_{T,h} )  \hat{\by} ) ,    g \alpha_T  T_h) + ( \DIV (  ( \kappa_S \polI + \bbkappa_{S,h} ) \hat{\by} ),  g \beta_S S_h    ) \\ & +  ( \bn \SCAL ( (\kappa_T \polI + \bbkappa_{T,h})\hat{\by} ), g \alpha_T T_h )_{\p \Omega} - ( \bn \SCAL ( (\kappa_S \polI + \bbkappa_{S,h})\hat{\by} ), g \beta_S S_h )_{\p \Omega}. \\
    \end{split}
  \end{equation}
Inserting this into \eqref{eq:proof_step0} concludes the proof.

  % Plan: Show energy estimate of model and include problematic term $( \alpha_T g  T \GRAD y  ,    \GRAD \kappa  )$. If $\GRAD \kappa = 0$ then we get Winters1995. Our method satisfies Winters1995.

\end{proof}

\begin{theorem}
  The stabilized FEM conserves angular momentum in the sense that
  \begin{equation}
   \p_t \int_\Omega (\bu_{h} \CROSS \bx)_i \ud \bx = - \int_\Omega \l( \frac{g \delta \rho_h}{\rho_0} \Hat{\by} \CROSS \bx \r)_i \ud \bx. %= - \l( \frac{g \delta \rho}{\rho_0} \Hat{\by}, \bpsi_i \r).
  \end{equation}
\end{theorem}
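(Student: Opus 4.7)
The plan is to test the momentum equation \eqref{eq:momentum_stabilized_symmetric} against the linear rigid-rotation field $\bv = \be_i \CROSS \bx$ (component-wise $v_m = \epsilon_{imn} x_n$, so $(\GRAD \bv)_{lm} = \epsilon_{iml}$ is a constant antisymmetric tensor). I will exploit three structural properties: (i) $\DIV \bv = 0$; (ii) $\GRAD \bv + (\GRAD \bv)^\top = 0$; and (iii) $\GRAD \bv$ is a constant tensor, hence lies in $[\bcalV_h]^d$ so that $\proj \GRAD \bv = \GRAD \bv$. I will also adopt the same no-slip framework as in \tref{proposition_potential}, under which the $\Gamma_{oc}$ terms in \eqref{eq:momentum_stabilized_symmetric} are absent and any integration-by-parts boundary integral vanishes through the factor $\bu_h|_{\p \Omega} = 0$.

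Most of the terms then drop immediately. By (i) the pressure pairing $(P_h,\DIV\bv)$, the grad-div stabilization $(\gamma_h\DIV \bu_h,\DIV\bv)$ and the new auxiliary contribution $\tfrac{1}{2}(\tfrac{\delta \rho_h}{\rho_0} g y, \DIV \bv)$ all vanish. By (ii), both symmetric viscous terms vanish because their test factor is $\GRAD \bv + (\GRAD \bv)^\top = 0$; combined with (iii), the VMS term also vanishes, its test factor reducing to $\GRAD \bv + (\GRAD \bv)^\top - \proj \GRAD \bv - (\proj \GRAD \bv)^\top = 0$. The time-derivative and gravity pairings give the two sides of the claimed identity directly through the algebraic definition of the cross product, producing $\p_t \int_\Omega (\bu_h \CROSS \bx)_i \ud \bx$ and $-\int_\Omega (\tfrac{g \delta \rho_h}{\rho_0} \hat{\by} \CROSS \bx )_i \ud \bx$ respectively.

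The main obstacle is the full EMAC convective block $(\bu_h \SCAL \GRAD \bu_h + (\GRAD \bu_h) \bu_h + (\DIV \bu_h) \bu_h, \bv)$. Under the paper's convention $(\GRAD \bu)_{lm} = \p_l u_m$ one computes $(\GRAD \bu_h) \bu_h = \GRAD(\tfrac{1}{2}|\bu_h|^2)$, so integration by parts together with (i) collapses this middle term to zero (the boundary contribution vanishes by no-slip). For the remaining pair I will use the identity \eqref{eq:IBP1} with $\bw = \bu_h$, giving $(\bu_h \SCAL \GRAD \bu_h, \bv) + ((\DIV \bu_h) \bu_h, \bv) = -(\bu_h \SCAL \GRAD \bv, \bu_h) = -\int_\Omega u_{h,l} u_{h,m} (\GRAD \bv)_{lm} \ud \bx$. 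The integrand pairs the symmetric tensor $\bu_h \otimes \bu_h$ with the antisymmetric $\GRAD \bv$ from (ii), hence vanishes. Combining these observations delivers the stated angular momentum balance.
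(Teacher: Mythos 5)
Your computation is correct and follows essentially the same route as the paper's proof: test the momentum equation with a rigid--rotation field, eliminate the pressure, grad-div, viscous and VMS terms using $\DIV \bv = 0$, $\GRAD \bv + (\GRAD \bv)^\top = 0$ and $\proj \GRAD \bv = \GRAD \bv$, and reduce the EMAC convective block to the pairing of the symmetric tensor $\bu_h \otimes \bu_h$ with the antisymmetric $\GRAD \bv$, exactly as in the paper's identity \eqref{eq:trillinear angular momentum}. Your treatment of the middle term $((\GRAD \bu_h)\bu_h, \bv) = (\GRAD(\tfrac{1}{2}|\bu_h|^2), \bv)$ via integration by parts is a harmless variant of the paper's use of \eqref{eq:IBP2} on $b(\bpsi_i,\bu_h,\bu_h)$; both collapse to $\DIV \bv = 0$ together with the vanishing of $|\bu_h|^2$ on $\p\Omega$. (Your $\bv = \be_i \CROSS \bx$ is the negative of the paper's $\bpsi_i = \bx \CROSS \be_i$, which only flips the sign of both sides.)

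The one genuine gap is the admissibility of the test function. With no-slip conditions the discrete space $\bcalV_h$ is taken to incorporate the homogeneous Dirichlet data, and $\bv = \be_i \CROSS \bx$ does not vanish on $\p\Omega$, so you are not entitled to insert it into \eqref{eq:momentum_stabilized_symmetric} as it stands; the issue is not a boundary integral produced by integration by parts but the very first substitution. The paper addresses this by following Charnyi et al.\ and Ingimarson et al.: it writes $\Omega = \hat{\Omega}\cup\Omega_s$ with $\hat{\Omega}$ strictly interior, assumes $\bu_h$ vanishes on the boundary strip $\Omega_s$, and tests with the restriction $\chi(\bpsi_i)\in\bcalV_h$ equal to $\bpsi_i$ on $\hat{\Omega}$ and to zero on $\Omega_s$, so that angular momentum conservation is obtained under this localization hypothesis. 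Without some such device (or without replacing no-slip by boundary conditions compatible with rigid rotations), your opening step is unavailable even though every subsequent manipulation is sound; adding the localization argument makes your proof complete and equivalent to the paper's.
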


\begin{proof}
We define $\bpsi_i \coloneqq  \bx \CROSS \be_i $ and note that $\bpsi_i$ has the properties $\DIV \bpsi_i = 0$ and $ \GRAD \bpsi_i + (\GRAD \bpsi_i)^\top = 0 $. For a correct definition of the cross product in 2D, the last component of all vectors is extended by 0. Following \citep{Charnyi2017,Ingimarson2023}, let $\Omega = \hat{\Omega} \cup \Omega_s$ where $\hat{\Omega}$ is a strictly interior subdomain of $\Omega$. We define the restriction $\chi(\bg) \in \bcalV_h$ of an arbitrary function $\bg$ by setting $\chi(\bg) = \bg$ in $\hat{\Omega}$ and $\chi(\bg) = 0$ in $\Omega_s$. We assume that $\bv$ and $\bu_h$ are zero in $\Omega_s$. We set $\bv = \chi( \bpsi_i)$ and obtain
  \begin{equation} \label{eq:ang_mom_proof}
    \begin{split}
    \p_t \int_\Omega (\bu_{h} \CROSS \bx)_i \ud \bx = & ( \p_t \bu_h , \bpsi_i) = - b(\bu_h,\bu_h,\bpsi_i) - b(\bpsi_i, \bu_h, \bu_h) - (  (\DIV \bu_h) \bu_h , \bpsi_i ) + ( P_h, \DIV \bpsi_i) \\ &- (\gamma_h \DIV \bu_h, \DIV \bpsi_i) - \frac{1}{2} \Biggl (   \nu    \l( \GRAD \bu_h + (\GRAD \bu_h)^\top \r) \Biggr . \\  & \Biggl . +   \sqrt{\bbnu_h}  \l( \GRAD \bu_h + (\GRAD \bu_h)^\top \r)  \sqrt{\bbnu_h}, \GRAD \bpsi_i + \GRAD \bpsi_i^\top \Biggr )   \\ &  - \frac{1}{2} \Biggl ( \sqrt{ \bbnu_{h,vms} }  \l( \GRAD \bu_h + (\GRAD \bu_h)^\top - \proj \GRAD \bu_h - (\proj \GRAD \bu_h)^\top  \r)  \sqrt{ \bbnu_{h,vms} } , \Biggr . \\
     & \Biggl . \GRAD \bpsi_i + \GRAD \bpsi_i^\top - \proj \GRAD \bpsi_i - \proj \GRAD \bpsi_i^\top \Biggr )    \\
      &  - \l(   \frac{g \delta \rho_h}{\rho_0} \Hat{\by} , \bpsi_i \r)  -  \frac{1}{2} \l(   \frac{\delta \rho_h  }{\rho_0} g y  , \DIV \bpsi_i \r) \\
      = &   b(\bu_h,\bpsi_i,\bu_h)  - \l(   \frac{g \delta \rho_h}{\rho_0} \Hat{\by} , \bpsi_i \r) = b(\bu_h,\bpsi_i,\bu_h) - \int_\Omega \l( \frac{g \delta \rho_h}{\rho_0} \Hat{\by} \CROSS \bx \r)_i \ud \bx, 
    \end{split}%
  \end{equation}
where $\DIV \bpsi_i = 0$, $\GRAD \bpsi_i + \GRAD \bpsi_i^\top = 0$, $\proj \GRAD \bpsi_i = \GRAD \bpsi_i$, and \eqref{eq:IBP1} and \eqref{eq:IBP2} was used.

Next, by using the definition of the trilinear form \eqref{eq:trilinear definition} it can be shown that
\begin{equation} \label{eq:trillinear angular momentum}
\begin{split}
 b(\bu_h,\bpsi_i, \bu_h) & = \frac{1}{2}b(\bu_h, \bpsi_i, \bu_h) + \frac{1}{2}b(\bu_h, \bpsi_i, \bu_h )   = \frac{1}{2} \l( (\GRAD \bpsi_i)^\top \bu_h, \bu_h  \r) + \frac{1}{2} \l( (\GRAD \bpsi_i)^\top \bu_h, \bu_h  \r) \\ & = \frac{1}{2} \l( (\GRAD \bpsi_i)^\top \bu_h, \bu_h  \r) + \frac{1}{2} ( (\GRAD \bpsi_i) \bu_h, \bu_h  ) = 0,
\end{split}
\end{equation}
since $\GRAD \bpsi_i + (\GRAD \bpsi_i)^\top = 0$. Inserting this into \eqref{eq:ang_mom_proof} concludes the proof.
\end{proof}

\begin{proposition}
  The stabilized tracer update \eqref{eq:tracer_stabilized_symmetric} is shift-invariant \eqref{eq:shift-invariant}, conserves tracer mass $\int_\Omega \phi_h \ud \bx$ and dissipates tracer energy $\int_\Omega \phi^2_h \ud \bx$.
\end{proposition}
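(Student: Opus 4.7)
The plan is to verify the three asserted properties one by one, in all cases testing \eqref{eq:tracer_stabilized_symmetric} with a carefully chosen $w \in \calM_h$ and exploiting that $\calM_h = Q_h = \polP^k_h$ so that the discrete divergence-free constraint \eqref{eq:divergence_free_stabilized} is available against any polynomial of degree $\leq k$ (in particular against $\phi_h$, against $\phi_h - \overline{\phi_h}$, and against constants). Throughout I assume the setting of Theorem \ref{proposition_potential}, so the restoring term, the ice-ocean flux, and the boundary terms drop out.

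\textbf{Step 1 (shift invariance).} I would carry out the argument exactly as in \eqref{eq:shift-invariant}: substitute $\phi_h \mapsto \phi_h + c$ for a fixed $c \in \mR$ and check term-by-term that the equation is unchanged. The terms $\p_t \phi_h$ and $\bu_h \SCAL \GRAD \phi_h$ are manifestly invariant, the ``EMAC-type'' correction $\tfrac12(\DIV\bu_h)(\phi_h - \overline{\phi_h})$ is invariant because $\overline{\phi_h + c} = \overline{\phi_h} + c$, and all diffusive/stabilization terms depend only on $\GRAD \phi_h$ and $\GRAD \phi_h - \proj \GRAD \phi_h$, which are invariant under adding a constant (using that $\proj$ is the projection defined in \eqref{eq:projection_LPS} so $\proj \GRAD c = 0$).

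\textbf{Step 2 (mass conservation).} Taking $w = 1 \in \calM_h$ kills all gradient-tested terms and reduces the equation to
\begin{equation*}
  \tfrac{d}{dt}\int_\Omega \phi_h \dx + \int_\Omega \bu_h \SCAL \GRAD \phi_h \dx + \tfrac12 \int_\Omega (\DIV\bu_h)(\phi_h - \overline{\phi_h}) \dx = 0.
\end{equation*}
Using \eqref{eq:advection_IBP1} with $w = 1$ and $\bu_h = 0$ on $\partial\Omega$, the second term equals $-(\DIV\bu_h, \phi_h)$. Since $\phi_h - \overline{\phi_h} \in Q_h$ and $\phi_h \in Q_h$, both $(\DIV\bu_h, \phi_h)$ and $(\DIV\bu_h, \phi_h - \overline{\phi_h})$ vanish by the discrete incompressibility constraint \eqref{eq:divergence_free_stabilized}. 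Hence $\tfrac{d}{dt}\int_\Omega \phi_h \dx = 0$.

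\textbf{Step 3 (tracer energy dissipation).} Taking $w = \phi_h \in \calM_h$ gives
\begin{equation*}
  \tfrac12 \tfrac{d}{dt}\|\phi_h\|^2 + (\bu_h \SCAL \GRAD \phi_h,\phi_h) + \tfrac12 ((\DIV \bu_h)(\phi_h - \overline{\phi_h}),\phi_h) + D_h = 0,
\end{equation*}
where $D_h$ collects the diffusive and stabilization terms. By \eqref{eq:advection_IBP2} the convective contribution is $-\tfrac12((\DIV\bu_h)\phi_h,\phi_h)$, and the $\overline{\phi_h}(\DIV\bu_h,\phi_h)$ piece of the EMAC correction vanishes by \eqref{eq:divergence_free_stabilized} since $\phi_h \in Q_h$; the remaining $\tfrac12((\DIV\bu_h)\phi_h,\phi_h)$ cancels the convective contribution exactly. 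For the symmetric form of the stabilization I use that with $w = \phi_h$ the VMS contribution becomes $\|\bbkappa_{\phi,h,vms}^{1/2}(\GRAD\phi_h - \proj \GRAD \phi_h)\|^2 \geq 0$, while the diffusion term yields $((\kappa_\phi \polI + \bbkappa_{\phi,h})\GRAD\phi_h,\GRAD\phi_h) \geq 0$ since $\bbkappa_{\phi,h}$ is a non-negative diagonal matrix. Combining these gives
\begin{equation*}
  \tfrac12 \tfrac{d}{dt}\|\phi_h\|^2 = -((\kappa_\phi \polI + \bbkappa_{\phi,h})\GRAD\phi_h,\GRAD\phi_h) - \|\bbkappa_{\phi,h,vms}^{1/2}(\GRAD\phi_h - \proj \GRAD \phi_h)\|^2 \leq 0.
\end{equation*}

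The only subtle point, and the one I expect to need most care, is the cancellation in Step 3: it relies crucially on the fact that the ``mean-shift'' variant $\tfrac12(\DIV\bu_h)(\phi_h - \overline{\phi_h})$ combines with the skew-symmetric splitting of the convective term in exactly the right proportion, and that the resulting polynomial multiplier still lies in the discrete pressure space $Q_h$ so that the weak incompressibility \eqref{eq:divergence_free_stabilized} can be invoked. The matching of the spaces $\calM_h = Q_h$ chosen in Section \ref{sec:ns_discrete} is what makes this work.
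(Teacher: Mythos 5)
Your proposal is correct and follows essentially the same route as the paper's (much terser) proof: test with $w=1$ for mass, $w=\phi_h$ for energy, and term-by-term invariance under $\phi_h\mapsto\phi_h+c$ for shift-invariance. The only minor imprecision is that in Section~\ref{Sec:stability_estimate} the pressure space is restricted to the zero-average space $Q_{0,h}$, so strictly one should write $(\DIV\bu_h,\phi_h)=(\DIV\bu_h,\phi_h-\overline{\phi_h})+\overline{\phi_h}(\DIV\bu_h,1)$ and use that the second term vanishes by the no-slip condition — which is exactly the observation the paper makes via $\phi_h-\overline{\phi_h}\in Q_{0,h}$.
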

\begin{proof}
  Mass conservation can be shown be setting $w = 1$ and using that $\l( \DIV \bu_h, \phi_h - \overline{\phi_h} \r) \in Q_{h,0}$. Tracer energy dissipation is shown by setting $w = \phi$. Shift-invariance is straightforwardly shown from \eqref{eq:shift-invariant}. See \citep{lundgren2024fully} for more details.
\end{proof}

% Let $t^n$ denote the time at the $n$:th discrete time step and  $\Delta t = t^n - t^{n-1}, n=0, 1, \dots$ be the size of the time step and let a variable with superscript $n$ denote the approximation of the variable at time $t^n$, e.g., $\uh$ is the approximation of $\bu(t_n)$.

\section{Numerical experiments}\label{sec:numexp}
We run two sets of experiments to validate our numerical method. The first set of experiments are aimed at validating the new theoretical results and the accuracy of the tensor-based stabilization. The purpose of the second experiment set is to test the method in a more realistic setting relevant to ocean circulation and glacial melting. We do this by reproducing one of the idealized Sherard Osborn fjord high-resolution MITgcm simulations from \citep{Wiskandt}. MITgcm has a numerical FV kernel for atmospheric and oceanic hydro-thermodynamic flow and has been used in modeling studies relevant to ice-sheet-ocean interactions in Greenland and Antarctica \citep{sciascia_seasonal_2013,cai_observations_2017,mitgcm_GOLDBERG,mitgcm_jordan}. %+ add  (e.g. Xu
%et al., 2012; Millgate et al., 2013; Sciascia et al., 2013, 2014;
%Carroll et al., 2015; Jordan et al., 2018) and the ice-shelf–
%ocean interactions for Greenland and Antarctic ice shelves
%(e.g. Dansereau et al., 2013; Cai et al., 2017). 

% To find an approximate (discrete) solution to \eqref{eq:ns} and \eqref{eq:tracer} with associated boundary conditions \eqref{eq:ocean_bcs} and \eqref{eq:tracer_bcs}, we employ FEM in space and a backward-differentiation-formula (BDF) [Wang2006] time-stepping in time. 

Our FEM is implemented in the open-source finite element library FEniCS \citep{Alnaes2015,fenics:book} with PETSc \citep{webpage:petsc,BalayEtAl1997} and hypre \citep{hypre} as linear algebra backends. In time we use high-order BDF time-stepping which is efficient and easy to implement. In particular, we use variable timestep BDF4 time-stepping \citep{Wang2008} together with a simple adaptive time-stepping routine following a given user-defined CFL-number. The nonlinear problem is linearized using the so-called Newton linearization from \citep{Charnyi2019}. We use the saddle-point preconditioning technique from \citep{lundgren_2024RV}, which employs a Schur complement approximation inspired by artificial compressibility methods \citep{Lundgren2022, Chorin1967, Guermond_Minev_2015}. The saddle-point technique is suitable for high Reynolds number flow. Additionally, an extrapolation technique is used to generate an initial guess, which speeds up the convergence of the iterative solver \citep{lundgren_2024RV}. Unless otherwise stated, we choose the following numerical parameters for all experiments: $C_{\text{max}} = 1.0, C_{\text{max,vms}} = 0.05, C_\Delta = 10$, $C_{\text{flat}} = 0.1$ $f_{RV} = 15 x^2$. We also let $(\phi_h, \bu_h, p_h)\in\polP_2 \CROSS \polP_3 \CROSS \polP_2$. %s_{max} = ...

\subsection{Numerical validation}

\subsubsection{No-flow problem} \label{Sec:noflow}
The primary purpose of this experiment is to numerically verify Theorem \ref{proposition_potential} and to test how well the method satisfies hydrostatic balance. Another goal is to compare the accuracy of the new SI-MEEDMAC formulation \eqref{eq:ns_EMAC_potential} to the SI-MEDMAC formulation \eqref{eq:ns_EMAC}. We note that SI-MEDMAC corresponds to EMAC \citep{Charnyi2017} when density is constant. We simplify the gravity force $\bef = - \frac{g \delta \rho}{\rho_0} \Hat{\by} = T \hat{\by}$ and consider the following no-flow solution to the Boussinesq system
\begin{equation}
  \begin{split}
    T &= \frac{1}{2} \tanh(5y) + 10, \\ %0.5*tanh(5*x[1])+1
    \bu &= \begin{bmatrix}
      0 \\ 0
    \end{bmatrix}, \\
    p &=  \frac{1}{10} \ln(\cosh(5y)) + 10 y - \frac{1}{|\Omega|} \int_\Omega \frac{1}{10} \ln(\cosh(5y)) + 10 y d \bx, \\ %(1.0/10.0)*log(cosh(5*x[1])) + x[1] 
    P &=  \frac{1}{10} \ln(\cosh(5y)) + 10 y - \frac{1}{2} y T - \frac{1}{|\Omega|} \int_\Omega \frac{1}{10} \ln(\cosh(5y)) + 10 y - \frac{1}{2} y T d \bx, \\ % (1-skew_rho)*x[1]*f_T_1*T;
  \end{split}
  \end{equation}
on the square $\Omega = [-1,1]\CROSS [-1,1]$ with no-slip boundary conditions for velocity and homogeneous Neumann boundary conditions for temperature. We perform a convergence study on a series of unstructured meshes with the constant time-step $\Delta t = h$ and with $\nu = 0.01$ and $\kappa = 0$. We also set the stabilization to zero ($C_{\text{max}} = C_{\text{max,vms}} = 0$). The convergence results are presented in Table \ref{table:noflow} and we observe the expected convergence rates for all components for both formulations. The errors are not normalized. We note that the new formulation is more accurate.

Next, we set $\kappa = \nu = 0$. Since there is no dissipation present, Theorem \ref{proposition_potential} then predicts that the sum of kinetic energy and potential energy $\| \bu_h \| - (T, y)$ should be conserved. Since time-discretization also affects this balance, for this experiment, we use the standard Crank-Nicolson method which is energy conservative \citep{Charnyi2017,lundgren2024fully}. We perform the experiment again with $4909$ $\polP_3$ and $2209$ $\polP_2$ nodes. In \fgref{fig:potential} we observe that total energy is conserved using our new formulation \eqref{eq:ns_EMAC_potential} but not with EMAC \eqref{eq:ns_EMAC}. We also observe that the new formulation is orders of magnitudes more accurate because of this.

% T-error
% 2	--87238	--755838	0.0354445	74	115	2209
% 2	0.460639	0.30193	0.961862	82	132	2209
%V-error
% 3	0.0110406	--74803	0.0383615	82	132	9818
% 3	0.0392586	0.0199734	0.0632694	74	115	9818
% P-error
% 2	--0400214	--0327315	--135306	82	132	2209
% 2	--132764	--0901223	--373332	74	115	2209

\begin{table}[H] 
  \centering     
  \caption{No-flow problem on an unstructured mesh. Convergence study with end time $\widehat{t}=1$, $ \nu=0.01$, $\kappa = 0$, $\Delta t = h$, $(T_h, \bu_h, p_h)\in\polP_2 \CROSS \polP_3 \CROSS \polP_2$ with no stabilization. We compare the SI-MEDMAC formulation \eqref{eq:ns_EMAC} with the SI-MEEDMAC formulation \eqref{eq:ns_EMAC_potential}.} \label{table:noflow}%
  \begin{tabular}{|c|c|c|c|c|c|c|c|}    
    \multicolumn{8}{c}{Velocity}\\
    \cline{1-8}
  &\multicolumn{1}{c|}{$\#$ DOFs} & \multicolumn{1}{c|}{$L_1$} & \multicolumn{1}{c|}{rate} & \multicolumn{1}{c|}{$L_2$} & \multicolumn{1}{c|}{rate} & \multicolumn{1}{c|}{$L_{\infty}$} & \multicolumn{1}{c|}{rate}\\
   \hline
  \parbox[t]{2mm}{\multirow{5}{*}{\rotatebox[origin=c]{90}{\footnotesize{SI-MEDMAC}}}} & 
         1754 &   7.40E-04 &   -- &   4.78E-04 &   -- &   1.61E-03 &   -- \\ 
  &      6314 &   4.76E-05 &   4.28 &   3.13E-05 &   4.26 &   1.86E-04 &   3.37 \\ 
  &      9818 &   1.85E-05 &   4.28 &   1.27E-05 &   4.10 &   8.06E-05 &   3.79 \\ 
  &     24164 &   3.22E-06 &   3.88 &   2.22E-06 &   3.87 &   1.41E-05 &   3.88 \\ 
  &     97232 &   1.96E-07 &   4.02 &   1.38E-07 &   3.99 &   1.07E-06 &   3.70 \\  \hline
  \parbox[t]{2mm}{\multirow{5}{*}{\rotatebox[origin=c]{90}{\footnotesize{SI-MEEDMAC}}}} & 
        1754 &   6.03E-04 &   -- &   4.17E-04 &   -- &   9.36E-04 &   -- \\ 
  &      6314 &   3.11E-05 &   4.63 &   2.01E-05 &   4.73 &   6.88E-05 &   4.08 \\ 
  &      9818 &   1.18E-05 &   4.38 &   8.14E-06 &   4.10 &   3.45E-05 &   3.12 \\ 
  &     24164 &   1.90E-06 &   4.06 &   1.26E-06 &   4.15 &   7.97E-06 &   3.26 \\ 
  &     97232 &   1.13E-07 &   4.06 &   7.53E-08 &   4.04 &   5.48E-07 &   3.85 \\ \hline 
    \multicolumn{8}{c}{Temperature}\\
    \cline{1-8}
    \hline
  &\multicolumn{1}{c|}{$\#$ DOFs} & \multicolumn{1}{c|}{$L_1$} & \multicolumn{1}{c|}{rate} & \multicolumn{1}{c|}{$L_2$} & \multicolumn{1}{c|}{rate} & \multicolumn{1}{c|}{$L_{\infty}$} & \multicolumn{1}{c|}{rate}\\
   \hline
  \parbox[t]{2mm}{\multirow{5}{*}{\rotatebox[origin=c]{90}{\footnotesize{SI-MEDMAC}}}} & 
         401 &   2.51E-03 &   -- &   3.57E-03 &   -- &   2.01E-02 &   -- \\ 
  &      1425 &   4.01E-04 &   2.89 &   5.49E-04 &   2.95 &   2.25E-03 &   3.45 \\ 
  &      2209 &   1.68E-04 &   3.98 &   2.37E-04 &   3.84 &   1.88E-03 &   0.83 \\ 
  &      5413 &   4.51E-05 &   2.93 &   6.10E-05 &   3.03 &   3.93E-04 &   3.49 \\ 
  &     21693 &   5.43E-06 &   3.05 &   7.69E-06 &   2.98 &   6.49E-05 &   2.60 \\   \hline
  \parbox[t]{2mm}{\multirow{5}{*}{\rotatebox[origin=c]{90}{\footnotesize{SI-MEEDMAC}}}} & 
        401 &   2.51E-03 &   -- &   3.53E-03 &   -- &   1.98E-02 &   -- \\ 
  &      1425 &   3.97E-04 &   2.91 &   5.40E-04 &   2.96 &   2.20E-03 &   3.46 \\ 
  &      2209 &   1.66E-04 &   3.97 &   2.34E-04 &   3.82 &   1.86E-03 &   0.78 \\ 
  &      5413 &   4.48E-05 &   2.92 &   6.05E-05 &   3.02 &   3.90E-04 &   3.49 \\ 
  &     21693 &   5.42E-06 &   3.04 &   7.67E-06 &   2.97 &   6.47E-05 &   2.59 \\ \hline
  
    \multicolumn{8}{c}{Pressure}\\
    \cline{1-8}
  
    \hline
  &\multicolumn{1}{c|}{$\#$ DOFs} & \multicolumn{1}{c|}{$L_1$} & \multicolumn{1}{c|}{rate} & \multicolumn{1}{c|}{$L_2$} & \multicolumn{1}{c|}{rate} & \multicolumn{1}{c|}{$L_{\infty}$} & \multicolumn{1}{c|}{rate}\\
   \hline
  \parbox[t]{2mm}{\multirow{5}{*}{\rotatebox[origin=c]{90}{\footnotesize{SI-MEDMAC}}}}  &
         401 &   3.97E-04 &   -- &   3.92E-04 &   -- &   1.49E-03 &   -- \\ 
  &      1425 &   5.12E-05 &   3.23 &   5.39E-05 &   3.13 &   2.05E-04 &   3.13 \\ 
  &      2209 &   2.20E-05 &   3.85 &   2.48E-05 &   3.54 &   9.11E-05 &   3.70 \\ 
  &      5413 &   5.79E-06 &   2.98 &   6.95E-06 &   2.84 &   3.28E-05 &   2.28 \\ 
  &     21693 &   7.38E-07 &   2.97 &   8.59E-07 &   3.01 &   5.03E-06 &   2.70 \\    \hline
  \parbox[t]{2mm}{\multirow{5}{*}{\rotatebox[origin=c]{90}{\footnotesize{SI-MEEDMAC}}}} & 
         401 &   3.17E-04 &   -- &   3.21E-04 &   -- &   1.17E-03 &   -- \\ 
  &      1425 &   3.59E-05 &   3.44 &   3.84E-05 &   3.35 &   1.61E-04 &   3.13 \\ 
  &      2209 &   1.54E-05 &   3.86 &   1.59E-05 &   4.01 &   6.80E-05 &   3.94 \\ 
  &      5413 &   3.95E-06 &   3.03 &   4.24E-06 &   2.95 &   2.45E-05 &   2.28 \\ 
  &     21693 &   4.79E-07 &   3.04 &   5.19E-07 &   3.03 &   3.55E-06 &   2.78 \\  \hline
  \end{tabular}                                                                   
  \label{table:ms_P3P3P2}                                                      
  \end{table}

  \begin{figure}[H]
    \centering 
    \begin{subfigure}[position]{0.49 \textwidth}
      \includegraphics[width=1.0\textwidth]{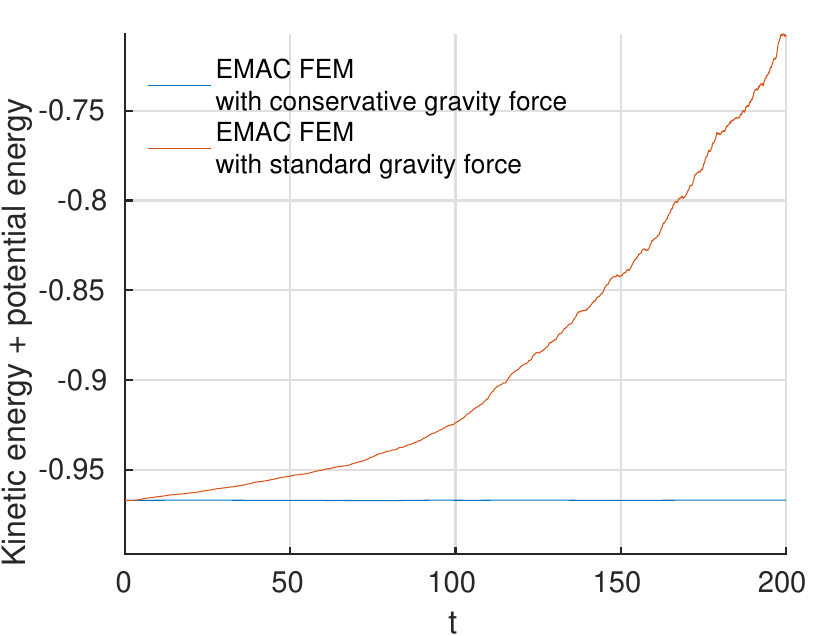}
      \caption{}
    \end{subfigure}     
    \begin{subfigure}[position]{0.49 \textwidth}
      \includegraphics[width=1.0\textwidth]{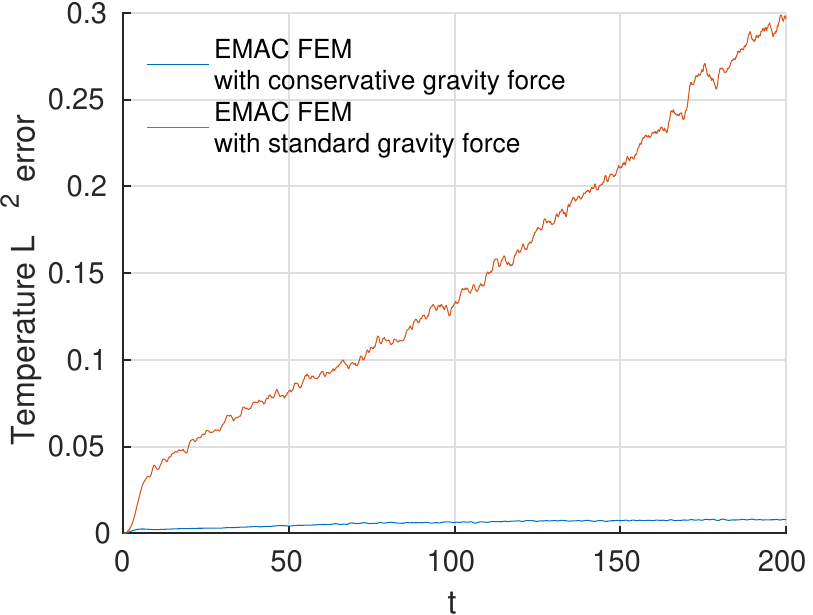}
      \caption{}
    \end{subfigure}
    \caption{Time evolution of (a) kinetic + potential energy and (b) $L^2$-error for $T_h$ using the SI-MEDMAC formulation \eqref{eq:ns_EMAC} (red) and the SI-MEEDMAC formulation \eqref{eq:ns_EMAC_potential} (blue). No stabilization is used and $\nu = \kappa = 0$. The unstructured mesh consists of $4909$ $\polP_3$ and $2209$ $\polP_2$ nodes.}    
    \label{fig:potential}
  \end{figure}

\subsubsection{Accuracy of the tracer advection solution}
In this section, we validate the accuracy of our FEM against the scalar advection equation on the square $\Omega = [-1,1]\CROSS [-1,1]$. We prescribe $\bu = \begin{bmatrix}
  -2 \pi y & 2 \pi x
\end{bmatrix}^\top$ and consider the following smooth tracer solution
\begin{equation}
  \phi = 2 + \frac{1}{2} \l( 1 - \tanh \l( \frac{ (x - x_0)^2 + (y - y_0)^2 }{r_0^2}  -1 \r) \r),
\end{equation}
with $ x_0 = 0.35 \cos(2 \pi t) $, $y_0 = 0.35 \sin( 2 \pi t )$ and $r_0 = 0.25$. We solve the problem until $\widehat{t} = 1$, which equals the overturning time-scale, \ie we solve for a full rotation and vary the element order between one and four. We set $CFL = 0.15$. The results are presented in Table \ref{table:convergence_advection_P1} and show suboptimal convergence for the unstabilized Galerkin method and nearly optimal convergence for the stabilized scheme. This is mainly due to the high-order dissipation present in the stabilized method and similar results were reported by \cite[Table 1]{kuzmin2023_weno} indicating that the high-order dissipation improves accuracy for smooth solutions. Since the solution is smooth the resulting residual is small and the low-order diffusion does not negatively affect the accuracy of the scheme.

% 2 + 
        % 0.5 *
        % (1 - tanh(((x[0] - x0) * (x[0] - x0) + (x[1] - y0) * (x[1] - y0)) /
                      % (r0 * r0) -
                  % 1));
                  % double x0 = 0.35 * cos(t * 2 * M_PI);
                  % double y0 = 0.35 * sin(t * 2 * M_PI);
                  % double r0 = 0.25;

                  \begin{table}[H]
                    \centering
                    \caption{Advection equation in 2D. Convergence study for $\phi_h$ for different polynomial degrees at time $\widehat{t}=1$ on a series of unstructured meshes. The errors are normalized with their corresponding norm.}
                    \label{table:convergence_advection_P1}
                \vspace{0.in}
                $\polP_1$ elements
                \\
                \vspace{0.1in}
                
                    \begin{adjustbox}{max width=\textwidth}
                    \begin{tabular}{c|c|c|c|c|c|c|c|c}
                    \hline
                    \multirow{2}{*}{\#DOFs} &  \multicolumn{4}{c|}{Galerkin} &  \multicolumn{4}{c}{Residual viscosity}  \\ \cline{2-9}
                     {}   &       L$^1$   &    Rate   &       L$^2$   &    Rate &       L$^1$   &    Rate   &       L$^2$   &    Rate   \\ \hline
                     5359 &   9.13E-04 &   -- &   1.25E-03 &   -- &   1.83E-04 &   -- &   5.29E-04 &   -- \\
                     21677 &   2.95E-04 &   1.62 &   4.15E-04 &   1.58 &   3.53E-05 &   2.35 &   1.03E-04 &   2.34 \\
                     86629 &   9.46E-05 &   1.64 &   1.39E-04 &   1.58 &   7.93E-06 &   2.15 &   2.35E-05 &   2.13 \\
                    346222 &   3.07E-05 &   1.62 &   4.84E-05 &   1.53 &   1.92E-06 &   2.05 &   5.63E-06 &   2.06  \\
                   \hline
                    \end{tabular}
                    \end{adjustbox}
                %\end{table}
                \\
                \vspace{0.2in}
                $\polP_2$ elements
                \\
                \vspace{0.1in}
                
                    \begin{adjustbox}{max width=\textwidth}
                    \begin{tabular}{c|c|c|c|c|c|c|c|c}
                    \hline
                    \multirow{2}{*}{\#DOFs} &  \multicolumn{4}{c|}{Galerkin} &  \multicolumn{4}{c}{Residual viscosity}  \\ \cline{2-9}
                     {}   &       L$^1$   &    Rate   &       L$^2$   &    Rate &       L$^1$   &    Rate   &       L$^2$   &    Rate   \\ \hline
                           5413 &   5.22E-04 &   --   &   6.99E-04 &   --   &   6.04E-05 &   --   &   1.64E-04 &   --   \\
                          21693 &   1.12E-04 &   2.22 &   1.53E-04 &   2.19 &   8.00E-06 &   2.91 &   2.34E-05 &   2.81 \\
                          87221 &   2.07E-05 &   2.42 &   2.87E-05 &   2.40 &   1.19E-06 &   2.74 &   3.68E-06 &   2.66 \\
                         347541 &   3.55E-06 &   2.55 &   5.28E-06 &   2.45 &   1.71E-07 &   2.81 &   5.58E-07 &   2.73 \\
                   \hline
                    \end{tabular}
                    \end{adjustbox}
                
                \vspace{0.2in}
                $\polP_3$ elements
                \\
                \vspace{0.1in}
                
                   \label{table:convergence_advection_P3}
                    \begin{adjustbox}{max width=\textwidth}
                    \begin{tabular}{c|c|c|c|c|c|c|c|c}
                    \hline
                    \multirow{2}{*}{\#DOFs} &  \multicolumn{4}{c|}{Galerkin} &  \multicolumn{4}{c}{Residual viscosity}   \\ \cline{2-9}
                     {}   &       L$^1$   &    Rate   &       L$^2$   &    Rate &       L$^1$   &    Rate   &       L$^2$   &    Rate   \\ \hline
                     4909 &   2.42E-04 &   -- &   3.80E-04 &   -- & 4.60E-05 &   -- &   1.25E-04 &   -- \\
                   12082 &   4.16E-05 &   3.92 &   6.56E-05 &   3.90 & 4.71E-06 &   5.06 &   1.52E-05 &   4.68 \\
                   48616 &   3.42E-06 &   3.59 &   5.38E-06 &   3.59 & 1.96E-07 &   4.56 &   6.80E-07 &   4.47 \\
                  195862 &   3.03E-07 &   3.48 &   5.10E-07 &   3.38 & 1.05E-08 &   4.21 &   3.88E-08 &   4.11 \\
                   \hline
                    \end{tabular}
                    \end{adjustbox}
                    \\
                    \vspace{0.2in}
                    $\polP_4$ elements%
                    \\
                    \vspace{0.1in}
                    \label{table:convergence_advection_P4}
                    \begin{adjustbox}{max width=\textwidth}
                    \begin{tabular}{c|c|c|c|c|c|c|c|c}
                    \hline
                    \multirow{2}{*}{\#DOFs} &  \multicolumn{4}{c|}{Galerkin} &  \multicolumn{4}{c}{Residual viscosity}   \\ \cline{2-9}
                     {}   &       L$^1$   &    Rate   &       L$^2$   &    Rate &       L$^1$   &    Rate   &       L$^2$   &    Rate   \\ \hline
                            8673 &   2.50E-05 &   -- &   3.76E-05 &   -- &   3.34E-06 &   -- &   9.65E-06 &   -- \\ 
                          21393 &   3.75E-06 &   4.21 &   5.83E-06 &   4.13 &   3.25E-07 &   5.16 &   1.10E-06 &   4.80 \\ 
                          86257 &   1.93E-07 &   4.25 &   3.07E-07 &   4.22 &   1.10E-08 &   4.86 &   4.07E-08 &   4.73 \\ 
                         347857 &   8.56E-09 &   4.47 &   1.45E-08 &   4.38 &   3.76E-10 &   4.84 &   1.45E-09 &   4.79\\
                   \hline
                    \end{tabular}
                    \end{adjustbox}
                \end{table}

\subsection{The Sherard Osborn fjord}\label{sec:ryder}
We now simulate a 2D representation of the Sherard Osborn fjord, consistent with the MITgcm model of \citep{Wiskandt}, see \fgref{fig:CirculationMelt}. The domain is $32$ \si{kilometers} long and $1000$ \si{meters} deep with an ice-tongue geometry that is an idealization based on observed data of Ryder glacier-Sherard Osborn system \citep{Jakobsson_etal2020}. The ice tongue is $20$ \si{kilometers} long and has a $50$ \si{meters} thick vertical front. The base of the tongue has a linear slope of $0.045$ \si{degrees}, which extends to the grounding line where there is $50$ \si{meters} high vertical wall.
%The domain is not including the sill structures and the ocean floor is flat. 
The boundary and initial conditions are described in Section \ref{sec:bcs}. The width of the restoring region $x_r$ is 2 km, and $x_{oc}=32$ km. We simulate for 35 days, as this is more than twice the overturning time scale of the problem. The time step is given by the CFL condition, with a CFL constant set to $0.25$. The relative tolerance of the iterative solver is set to $10^{-6}$.

We reproduce Figure 1a (melt rates, density, streamlines), 1d (temperature and salinity profiles), 3a (horizontal velocity) and B1 (evolution of kinetic energy, average melt rate and temperature change) of \citep{Wiskandt}.
In \citep{Wiskandt}, the Atlantic water temperature at the open ocean boundary as well as the subglacial discharge rate is varied in order to perform sensitivity studies. As the aim of this paper is to compare the results of our continuous Galerkin FEM model to FV models like MITgcm we only run the control experiment representing winter conditions from \citep{Wiskandt}, i.e., we use a low-temperature boundary condition ($T_{AW}$ below) at the open ocean and do not inject subglacial discharge. This experiment is denoted \verb|control_win| in \citep{Wiskandt}.

\subsubsection{Restoring function}
The profile $\phi_{res}$ which serves as boundary and initial conditions for temperature and salinity is constructed from measured data just in front of the ice front (Station 16, 17 from Fig. 1 in \citep{Jakobsson_etal2020}), as in \citep{Wiskandt}. 

%
%Since the area-averaged melt rates in our simulations are comparable to those from satellite observations (Wilson et al., 2017; see Sect. 4)
% melt rates in the summer control simulation near the grounding line are 40?50myr?1, as in the observations, while they are around 20?30 m yr?1 away from the grounding line compared to the observed 10?20myr?1. The area- integrated basal melt for the control winter experiment (taking the ice tongue width of 8.5 km) is about 3 km3 yr?1 as compared to the observed 1.8 � 0.21 km3 yr?1 (Wilson et al., 2017) and about 4 km3 yr?1 for the summer control experiment (Table 2).
%
\begin{equation}\label{eq:phires}
  \phi_{res} = \frac{(\phi_{AW}+\phi_{PW})}{2}+ \frac{\phi_{AW}-\phi_{PW}}{2}\tanh{\left(5\pi \frac{d_{pycno}-y}{d_{total}}  \right)},
\end{equation}
where $d_{pycno}=800$ is the distance from the ocean floor to the pycnocline, $d_{total}=1000$ is the total depth of the ocean, $\phi_{AW}=T_{AW}=0.2$, $\phi_{PW}=T_{PW}=-1.6$ (\si{\celsius}) if $\phi$ represents temperature $T$, and $\phi_{AW}=S_{AW}=35 $,  $\phi_{PW}=S_{AW}=34$ (\psu) if $\phi$ represents salinity. 
The profile \Eqref{eq:phires} reflects that the Sherard Osborn is stratified with warm and saline Atlantic water at depth and a colder and fresher polar water of Arctic origin layer closer to the surface, as is typical for a Greenlandic fjord \citep{Straneo_etal2012}.%
\begin{table}
  \caption{Parameters related to density (the equation of state), viscosity, diffusivity and melt parameterization.}
  \centering
  \begin{tabular}[h!]{l l l l}
    \hline
    \hline
    Symbol & Value & Units & Parameter\\
    \hline
    % NON-GREEK STUFF %%%%%%%%%%%%%%%%%%%%%%%%%
    $\kappa_{T}$ & $1.41 \times 10^{-7}$ & \si{\meter^2 \per \second^2}& Thermal diffusivity of seawater \\
       $\kappa_{S}$ & $8.01 \times 10^{-10}$ & \si{\meter^2 \per \second^2}& Salinity diffusivity of seawater \\ 
    $h_{\epsilon}$ & $3 + \frac{1}{3} $ & \si{\meter}& Boundary layer thickness\\
        $\nu$ & $1.95\times 10^{-6}$ & \si{\meter\squared\per\second} & Kinematic viscosity of seawater\\
    $c_d$ & $1.5\times 10^{-3}$ & --- & Drag coefficient\\
    $K$ & 0.40 & --- & Von K\'arm\'an's constant\\
    %$f$ & --1 \lukas{is negative in code!?} & \si{\per\second} & Coriolis parameter\\
    $c_i$ & 2009.0 & \si{\joule\per\kilogram\per\kelvin} & Specific heat capacity ice shelf\\
    $c_m$ & 3974.0 & \si{\joule\per\kilogram\per\kelvin} & Specific heat capacity mixed layer\\
   % $p$ & & \si{\pascal}  & Water pressure\\
    %$T$ & & \si{\celsius}  & Water temperature\\
    $T_0$ &0 & \si{\celsius}  & Reference water temperature\\
   % $S$ & & \si{\psu} & Water salinity\\
    $S_0$ &35 & \si{\psu} & Reference water salinity\\
    $\rho_0$ & 999.8 & \si{kg\per\cubic\meter} & Reference density\\
   % $T_m$ & & \si{\celsius}  & Temperature of the mixed layer\\
   % $S_m$ & & \si{\psu} & Salinity of the mixed layer\\
   % $u$ & & \si{\meter\per\second}  & Water velocity\\
    % $u_*$ & & \si{\meter\per\second}  & Friction velocity\\
    %$u_m$ & & \si{\meter\per\second}  & Velocity of mixed layer\\
    
    % GREEK STUFF %%%%%%%%%%%%%%%%%%%%%%%%%%%%%%%%%%%%%%
  %  $\gamma_{T/S}$ & \si{\meter\per\second} & Thermal/Salinity exchange velocity\\
    $\alpha_T$ & $0.4\times 10^{-4}$ & \si{\per\celsius} & Thermal expansion coefficient\\
    $\beta_S$ & $8\times 10^{-4}$ & \si{\per\celsius} & Salinity expansion coefficient\\

    $\xi_N$ & 0.052 & --- & Stability constant \\
    $\Pi^{S}$ & 2432 & --- & Schmidt number (Sc)\\
    $\Pi^{T}$ & 13.8 & --- & Prandtl number (Pr)\\
    $\lambda_1$ & $3.34 \times 10^{5}$ & \si{\celsius\per\psu} & Salinity coefficient of freezing equation\\
    $\lambda_2$ & -0.0573 & --- & Constant in freezing equation\\
    $\lambda_3$ & 0.0939& \si{\celsius\per\pascal} & Pressure coefficient of freezing equation\\
    \hline
  \end{tabular}
  \label{tab:parameters}
\end{table}
\subsubsection{Melt rate parameterization and its discretization}

%\christian{I made an attempt to organize some things here. Slight change of notation, more in line with Holland/Jenkins and Kimura.}
%The virtual heat and salt flux $F_{\phi}$ at the ice-ocean interface $\Gamma_{ice}$ is constructed from the solution of the three-equation formulation \citep{HollandJenkins1999} 
The tracer fluxes $F_{\phi}$ used to specify the boundary conditions \eqref{eq:tracer_bc_flux} at $\Gamma_{ice}$ are given by a virtual flux formulation \citep{JenkinsEtAl2001}
\begin{subequations}
  \label{eq:tracer_flux}
  \begin{align}
    F_S =& (\gamma_S + m_b)(S_m - S_b), \label{eq:salinity_flux}\\
    F_T =& (\gamma_T + m_b)(T_m - T_b), \label{eq:temp_flux}
  \end{align}
\end{subequations}
where the temperature and salinity at the base, $T_b$ and $S_b$, and the melt rate, $m_b$, are the solutions to the three-equation formulation \citep{HollandJenkins1999} which is solved node-wise at each time step:
\begin{subequations}
  \begin{align}
    T_b &= \lambda_1 S_b + \lambda_2 + \lambda_3 p_b\label{eq:3eqs_temp}\\
    m_b L + m_b c_i (T_b - T_i) &= c_m\gamma_T (T_m - T_b)\label{eq:3eqs_melt}\\
    m_b S_b &= \gamma_S (S_m - S_b).\label{eq:3eqs_salt}
  \end{align}
  \label{eq:3eqs}%
\end{subequations}
Here the subscripts $_m$ and $_i$ refer to the parameter value in the mixed ocean layer at the base of the ice shelf and of the ice shelf itself, respectively, while $c_{i,m}$ are the specific heat capacities and $\lambda_{1,2,3}$ are coefficients (see Table \ref{tab:parameters}). The tracer exchange velocity, $\gamma_{\phi}$ (where $\phi$ as above denotes either $T$ or $S$) is in here a compressed form of \cite{HollandJenkins1999}
\begin{equation}
  \label{eq:gamma_tracer}
  \gamma_{\phi} = \frac{\sqrt{c_d}u_m}{\Gamma_{Turb} + \Gamma^{\phi}_{Mole}}, \quad
    \Gamma_{Turb} =  \frac{1}{2\xi_N} - \frac{1}{K}, \quad   \Gamma^{\phi}_{Mole} = 12.5 \Pi^{\phi}. % \frac{1}{K}\ln\left( \frac{c_d u_m^2 \xi_N}{5 f \nu} \right) +
\end{equation}
In \eqref{eq:gamma_tracer}, the only variable is $u_m$, the velocity of the mixed layer. All other parameters are specified constants and are given in Table \ref{tab:parameters}.

Following \citet{Kimura_etal2013}, the values in the mixed layer ($T_m, S_m, u_m$) are evaluated at a distance of the boundary layer thickness $h_{\epsilon}$ away from the boundary in the normal direction. 

In this paper, we compute the (discrete) unit boundary normal $\bn_h$ as the area-weighted average of the normals on all cell (triangle) facets connecting to the DOF \citep[e.g.][]{John2002, Nazarov_Larcher_2017}. 
%Since $\mathcal{T}_h$ is a triangulation, $\Gamma_{ice}$ consists of straight facets, so $\bn_h\in P^1(\Gamma_{ice})$.
For a DOF located at coordinate $\bx_d$, we then compute mixed-layer tracer and velocity values as:
\begin{align*}
  u_m = \max\l( \bu_h(\bx + \bn_h h_{\epsilon}), 10^{-3} \r),
  T_m = T_h(\bx + \bn_h h_{\epsilon}),
  S_m = S_h(\bx + \bn_h h_{\epsilon}).
\end{align*}

Given the mixed layer values ($T_m, S_m, u_m$), the three-equation system \eqref{eq:3eqs} can then be reduced to a quadratic equation solving for $S_b$ (one solution of which will always be positive \citep[e.g.][]{JenkinsEtAl2001}) which is in turn used to solve for $T_b$ and $m_b$. Finally, to ensure that the resulting heat and salt fluxes in  \eqref{eq:tracer_flux} are smooth, we solve the following projection problem: Find $\projS F_\phi \in \calM_h$ such that
\begin{equation}
(F_{\phi,h} , w) = (F_\phi , w) + C_{\Delta} (\GRAD F_\phi , \GRAD w), \quad \forall w \in \calM_h,
\end{equation}
where $C_{\Delta}$ is a constant.

%The tracer flux is to be computed for the ice-ocean interface so we only consider a small subset of the degrees of freedom (dof) belonging to $\calM_h$, namely those that lie on $\Gamma_{ice}$. 
%As long a the values for the appropriate variables in the mixed layer are determined, 

%The run time for solving the three-equation system and is much smaller than that of solving the coupled system \eqref{eq:stabilized fem}. 

\subsubsection{Variable resolution, fully unstructured mesh}
The mesh is unstructured under the ice shelf, and structured north (right) of the ice shelf, see the left panel of \fgref{fig:rydermesh}. The unstructured mesh allows for representation of the ice-shelf geometry while a structured mesh outside the ice shelf allows for an accurate representation of the isopycnals in this area where the flow is closer to hydrostatic equilibrium. The mesh is refined close to the ice, and around the pycnocline in order to resolve the buoyant meltwater plume and density gradients. It is also slightly refined close to the ocean floor, see bottom panel of \fgref{fig:colorpics}. The finest mesh resolution close to the ice shelf is 3.5 \si{\meter}. This is comparable to the resolution in the equidistant, anisotropic grid used in the MITgcm simulations of \citep{Wiskandt} which consists of cells that are $3.33$ \si{\meter} in the vertical and $10$ \si{\meter} in the horizontal direction. %For parallelization, the mesh is partitioned into 128 subdomains.
\begin{figure}[H]
    \centering \includegraphics[width=1.\textwidth]{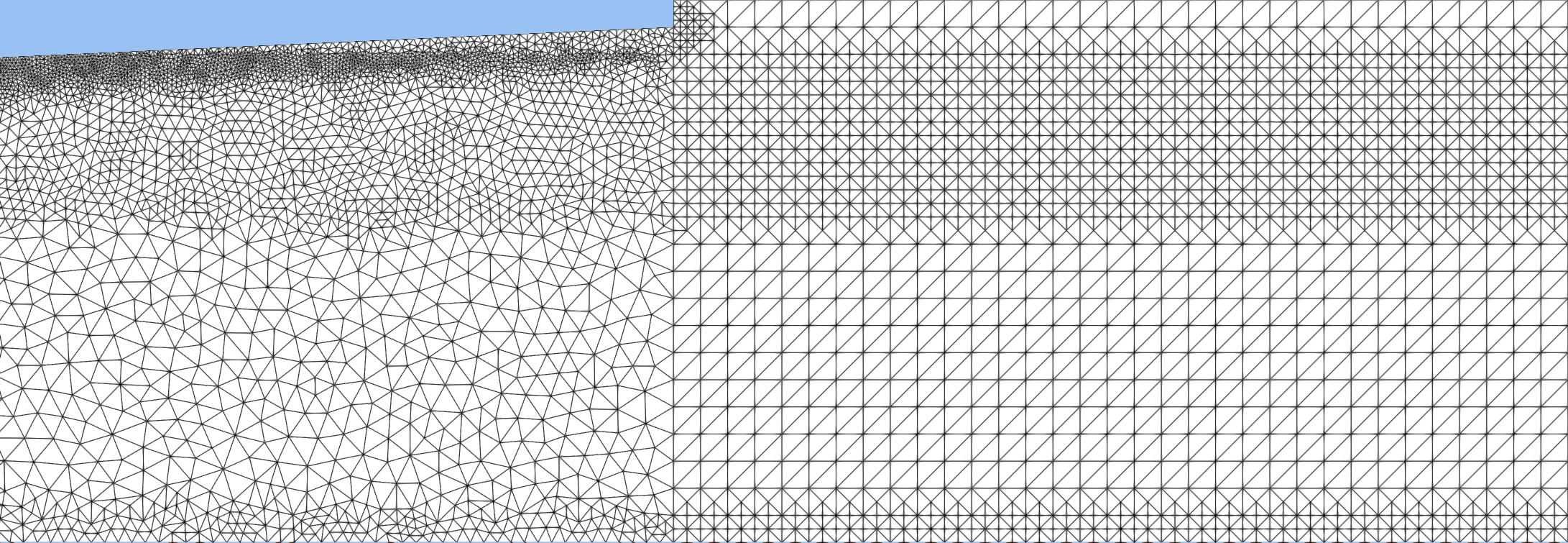}
  \caption{Unstructured mesh used for the Ryder simulation at $x \in [18.8\si{km},  21.7 \si{km}]$.}
      \label{fig:rydermesh}
\end{figure}

\subsubsection{Results and comparison to MITgcm}
In \fgref{fig:CirculationMelt} the time-average density distribution, streamlines and melt rates are shown. A buoyant layer of water forms close to the ice (typically referred to as a buoyant plume) and leaves the ice shelf when it reaches neutral buoyancy with respect to the surrounding water. The plume creates an estuarine circulation in the fjord, with an average overturning time $ \tau_0=11.9$ days, which is estimated as
\begin{equation}
    \tau_0(t) = 2 | \Omega | / \int_{y_{min}}^{y_{max}} | \bx \SCAL \bu(x=21 \si{km}, t) | \ud \bx.
\end{equation}

Due to circulation, warm Atlantic water (modeled by the restoring function at the open ocean boundary) flows along the ocean floor towards the glacier, causing it to melt further. The new open ocean boundary conditions, i.e. the modified directional do-nothing condition with a hydrostatic forcing \eqref{eq:ddn} let water flow in and out of the fjord. %Note that the term "nothing" of the original formulation refers to that no stress is applied at the boundary. In our case, we do apply a hydrostatic pressure but we nevertheless keep the term "do-nothing".

The melt rate as computed by the three-equation formulation is on average $+17.95$ \si{\meter \per yr}, meaning that the contribution to the mass balance of the Ryder glacier from submarine basal melting is negative. The melt rate increases with the $x$-coordinate, until the point where the plume detaches and the melt rate vanishes with the velocity (see \eqref{eq:3eqs} and \eqref{eq:gamma_tracer}
). The computed melt rate is similar to but slightly higher than in \citep{Wiskandt} (17.36 \si{\meter \per yr}), despite the no-slip conditions at the bottom and top of the domain (\citep{Wiskandt} use quadratic drag at the ice-sheet). The higher melt rate can probably be attributed to the residual viscosity sub-grid model, which adds less artificial viscosity than the more diffusive eddy viscosity model of MITgcm. This results in more fine structures and higher variation in the solution, see \fgref{fig:colorpics}. The temperature and salinity profile seen in  \fgref{fig:vertprofs} are thus less impacted by artificial smearing as compared to Fig 1d of \citep{Wiskandt}, so that the halocline and thermocline are closer to the initial state than that of \citep{Wiskandt}. The velocity profile is also less diffused with a higher peak velocity, see the top panel of \fgref{fig:vertprofs}, where the peak jet velocity is above $0.1$ \si{\meter \per yr}, while the velocity of \citep{Wiskandt}, as seen in their Fig. 3, is around $0.075$ \si{\meter \per yr}. We conclude that due to the new viscosity sub-grid model, the entrainment and mixing of ambient water into the buoyant plume is weaker than in the MITgcm simulation and that the plume water close to the ice in the FEM model, therefore, is more buoyant and hence faster, leading to higher melt rates. As excess diffusion and entrainment are important questions in modeling ice sheet-ocean interactions (see Discussion in \cite{Wiskandt}), future work is needed to quantify this in more detail and make recommendations for the climate modeling community.

\Fgref{fig:timeseries} shows the evolution of normalized kinetic energy $E_K(t)$, integrated temperature difference as compared to the initial values $\Delta T(t)$, and average melt rate $M_b(t)$, computed as

\begin{equation}
\begin{split}
    M_b = \int_{\Gamma_{ice}} m_b ds, \quad E_k(t) = \frac{ \| \bu(t) \|^2 - \hat{t}^{-1} \int_0^{\hat{t}} \| \bu(t) \|^2 dt }{ \text{std}( \| \bu(t) \|^2) },  \quad
    \Delta T(t) = \frac{\int_\Omega T(t) \ud \bx - \int_\Omega T(0) \ud \bx }{| \Omega | } ,% (energy - mean(energy))/std(energy) 
    % 2*abs(mesh_volume/((trapz(T.Points_1,abs(u_avg)*seconds_per_day))))
\end{split}
\end{equation}
where $\text{std}(f)$ denotes the standard deviation of $f$. These should be compared to Fig. B1 in \citep{Wiskandt}. The higher peak velocity of our simulations explains why we get a lower overturning timescale $\tau_0=11.9$ days compared to the $27$ days of the MITgcm model \citep{Wiskandt}, while the less diffused temperature explains the lower integrated temperature change $\Delta T$ of $-8\cdot 10^{-3}$ \si{\celsius} compared to the $-2.5 \cdot 10^{-2}$  \si{\celsius} of \citep{Wiskandt}.

\begin{figure}[H]
  \centering
    \begin{subfigure}{1.0 \textwidth}
    \centering
  \includegraphics[width=0.6\textwidth]{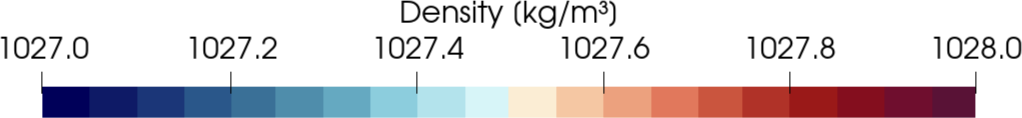}
  \includegraphics[width=1.\textwidth]{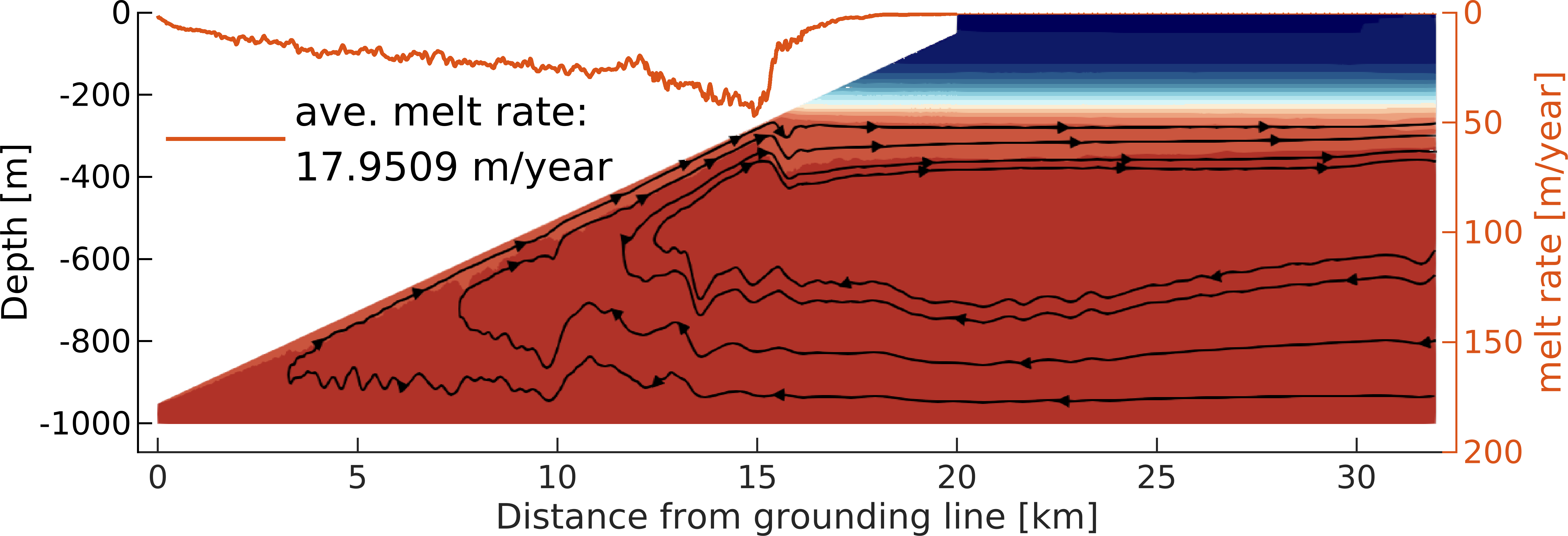}
  \end{subfigure}
    % \begin{subfigure}{1.0 \textwidth}
    % \centering
    % \includegraphics[width=1.0\textwidth,height=0.2 \textheight]{figures/boundary-melt.eps}
    % \end{subfigure}
  \caption{Time-averaged melt rate on the ice-shelf (red line) and time-averaged streamlines (black) superimposed on the density profile. The averages are taken from day 10 until the simulation is ended ($\approx 35$ days).}
      \label{fig:CirculationMelt}
\end{figure}%

\newcommand \leftTrim{5}
\newcommand \rightTrim{5}
\newcommand \sizeFig{1.0}

  \begin{figure}[H]
    \centering
  \begin{subfigure}{\sizeFig \textwidth}
    \centering
    \includegraphics[trim={\leftTrim cm 0 \rightTrim cm 18cm},clip,width=0.85\textwidth]{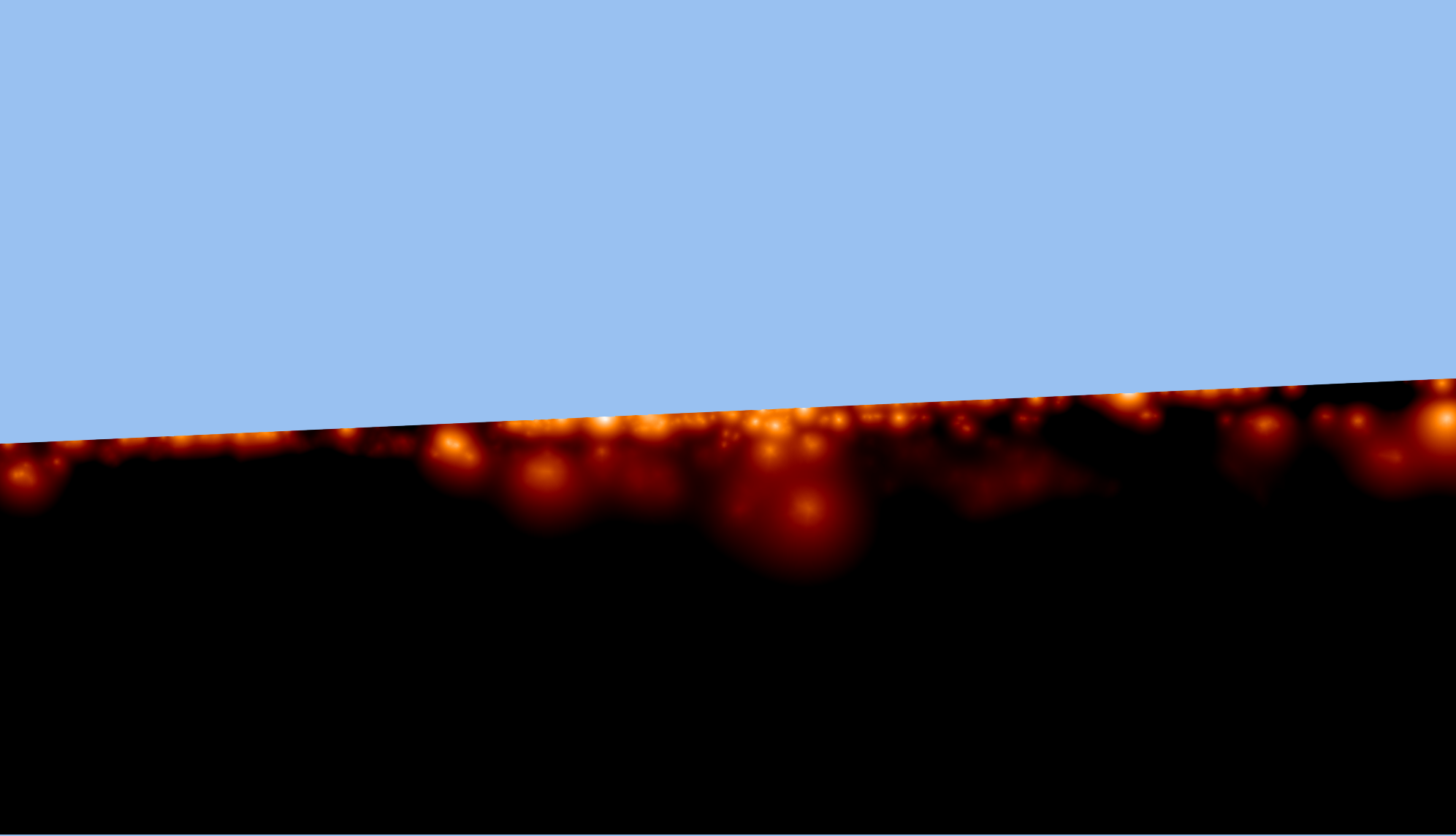}
    \includegraphics[width=0.105\textwidth]{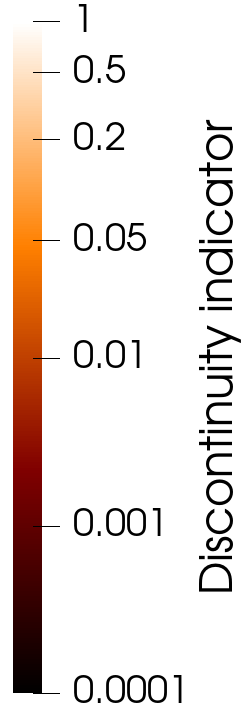}
  \end{subfigure} \\
  \centering
  \begin{subfigure}{\sizeFig \textwidth}
    \centering
    \includegraphics[trim={\leftTrim cm 0 \rightTrim cm 18cm},clip,width=0.85\textwidth]{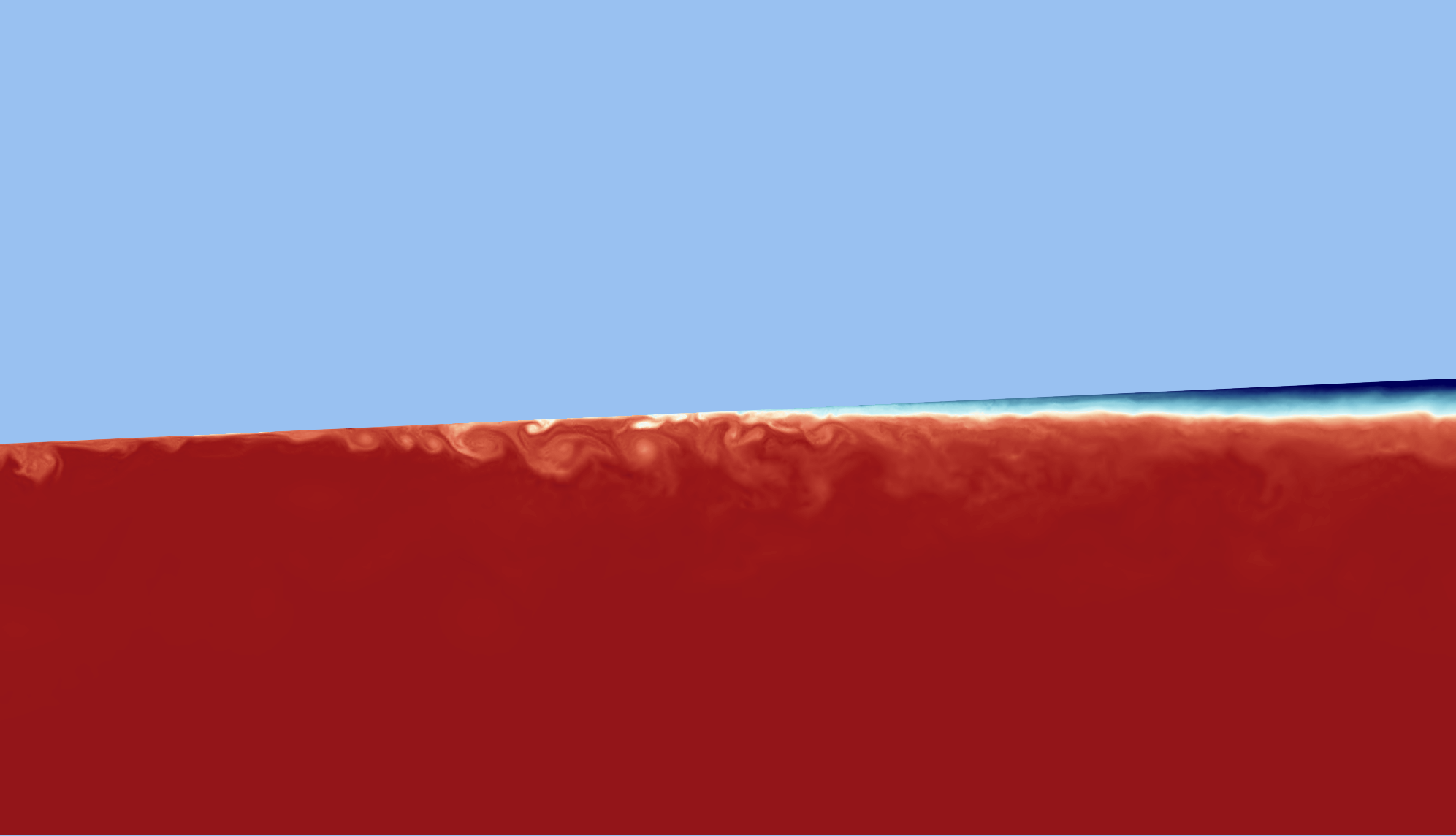}
    \centering
    \includegraphics[width=0.105\textwidth]{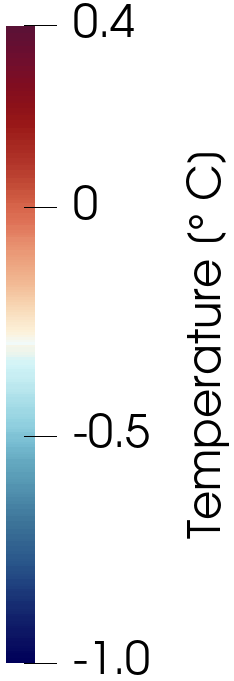}
  \end{subfigure} \\
  \centering
  \begin{subfigure}{\sizeFig \textwidth}
    \centering
    \includegraphics[trim={\leftTrim cm 0 \rightTrim cm 18cm},clip,width=0.85\textwidth]{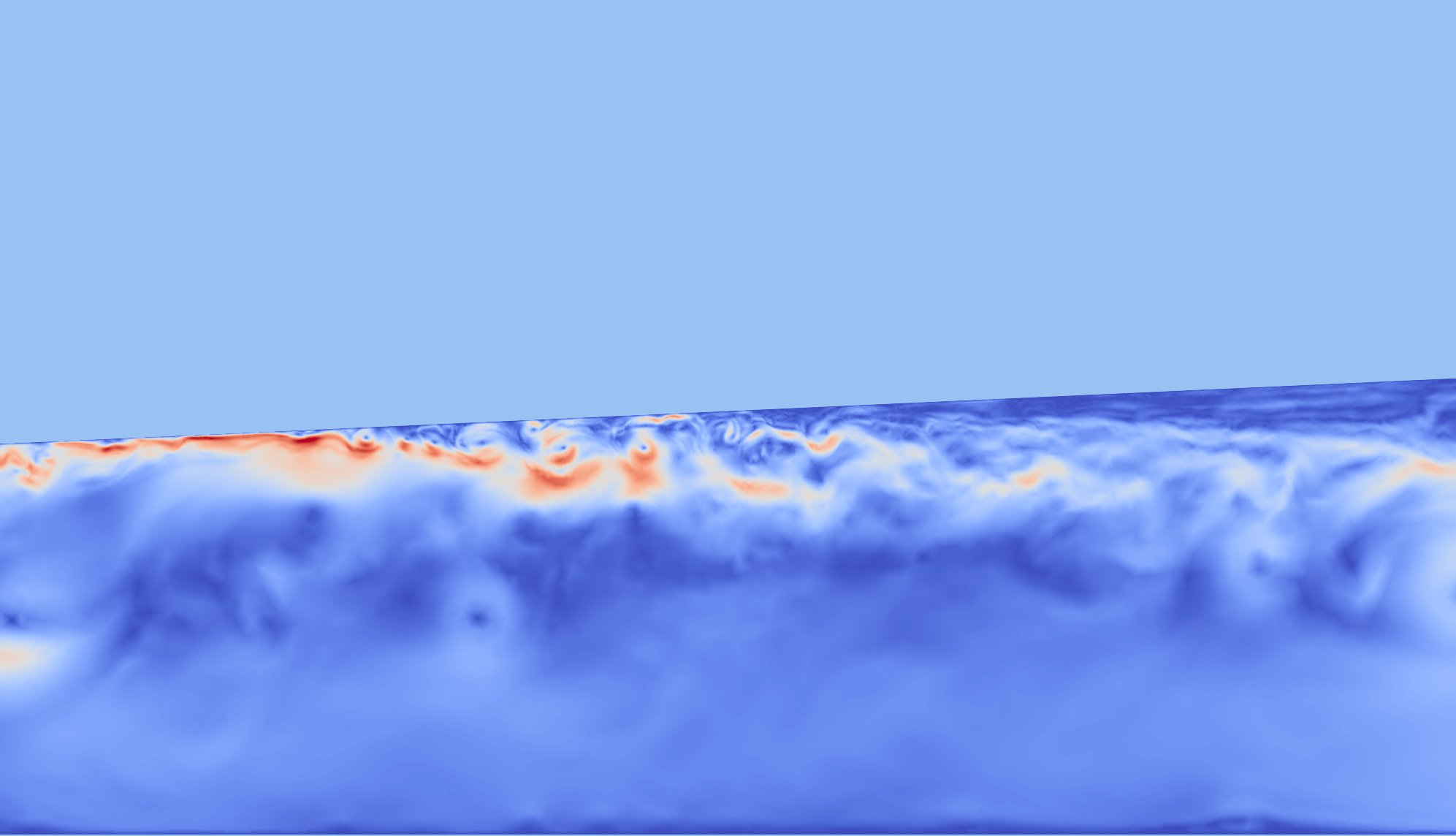}
    \centering
    \includegraphics[width=0.105\textwidth]{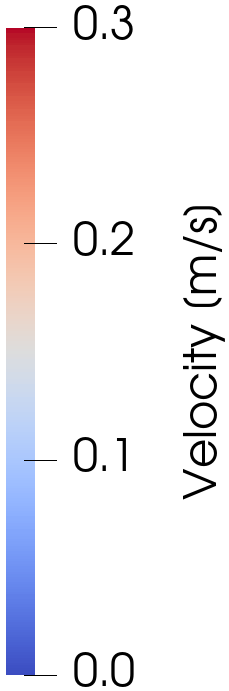}
  \end{subfigure} \\
  \begin{subfigure}{\sizeFig \textwidth}
    \centering
    \includegraphics[trim={\leftTrim cm 0 \rightTrim cm 18cm},clip,width=0.85\textwidth]{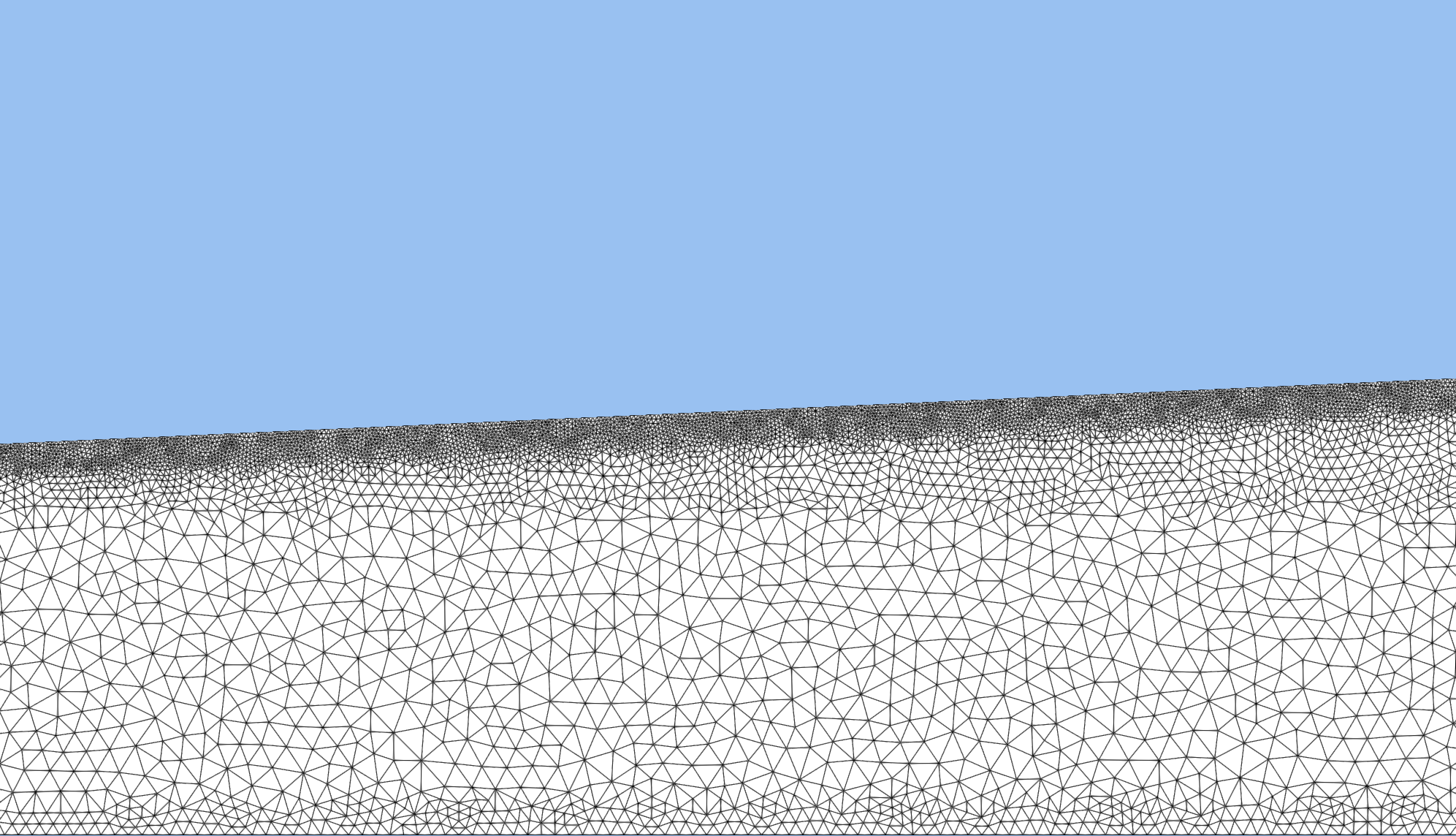}
    \includegraphics[width=0.105\textwidth]{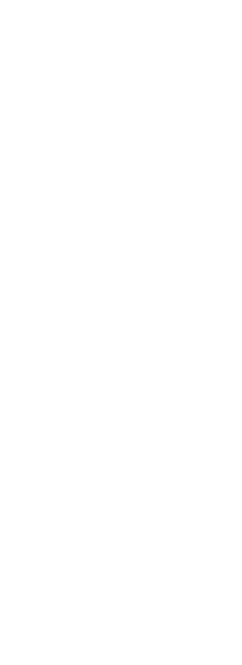}
  \end{subfigure}
  \caption{From top to bottom, snapshots of the discontinuity indicator for temperature in logarithmic scale (first), temperature (second), velocity magnitude (third) at $t =$ 10 days at the place where most melting occurs ($x \approx 15$ km). The unstructured mesh (bottom) is locally refined close to the ice shelf with $h \approx 3.5 \si{m}$. An animation is available in the supplementary material online.} %Mesh resolution at ice is 3 cells at for each 20 meters corresponding do hk = 7. which is 3.5 meters in HP2. JW uses 3.33 m in vertical and 10 m in horizontal. Note to self: Simulation time is at sample number 1291
   
    \label{fig:colorpics}
\end{figure}
\begin{figure}[H]
  \begin{subfigure}{1.0 \textwidth}
  \centering
  \includegraphics[width=0.5\textwidth]{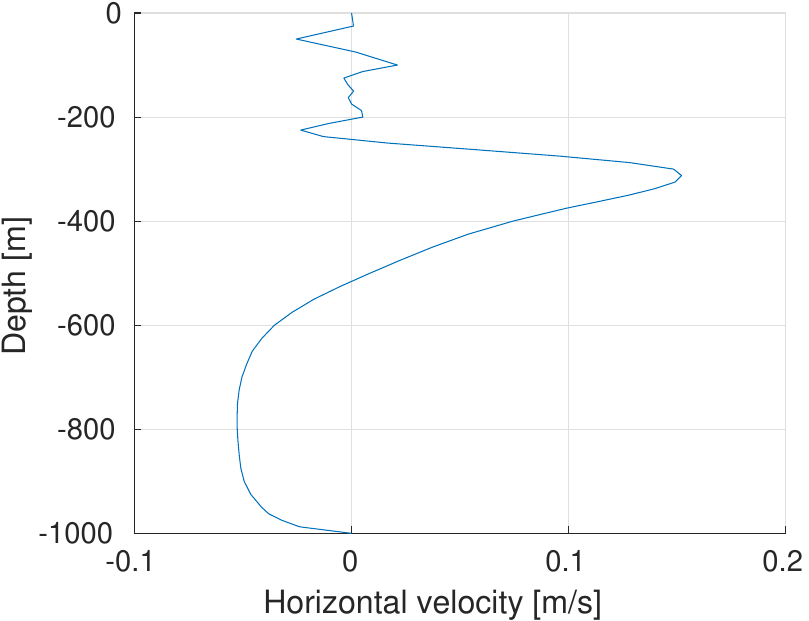}
  \caption{}
 \end{subfigure}
  \begin{subfigure}{0.5 \textwidth}
  \centering
  \includegraphics[width=1.0\textwidth]{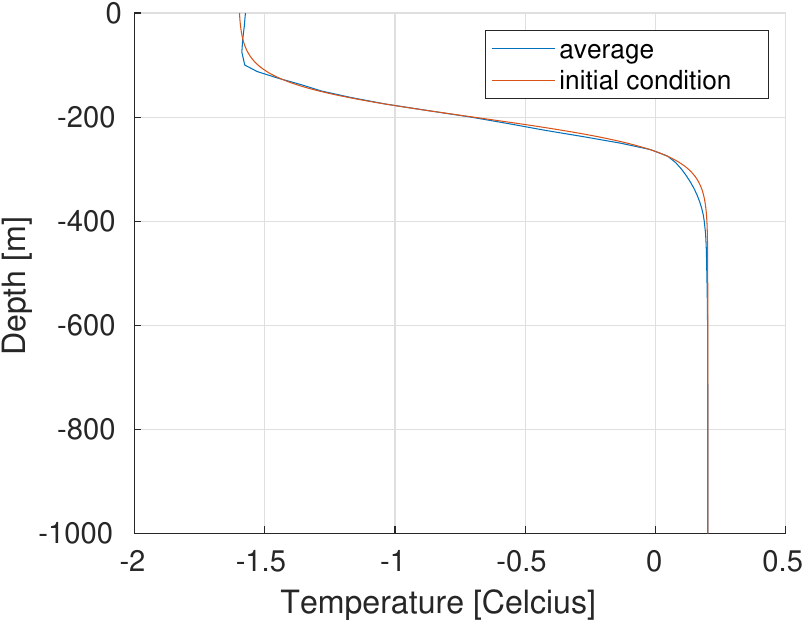}
  \caption{}
  \end{subfigure}
  \begin{subfigure}{0.5 \textwidth}
  \centering
  \includegraphics[width=1.0\textwidth]{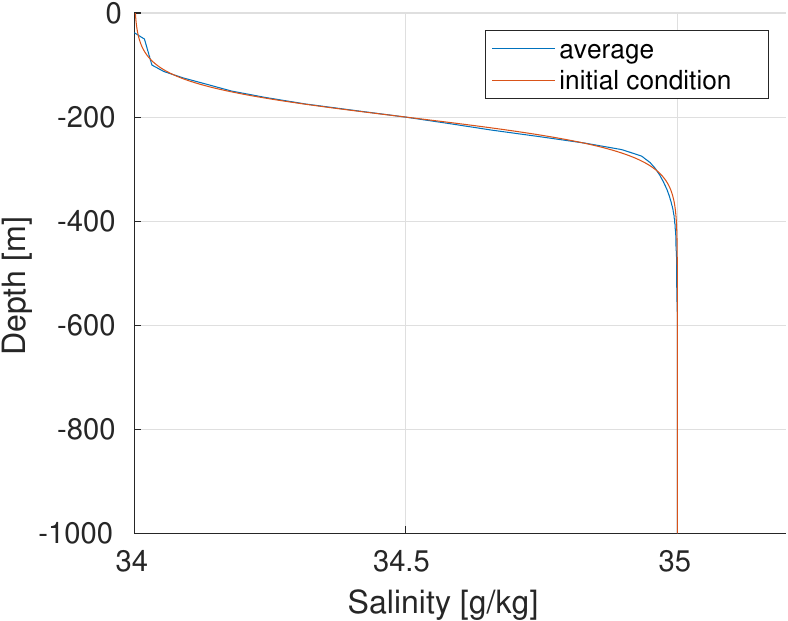}
  \caption{}
  \end{subfigure}  
  \caption{Time-averaged horizontal velocity (a), temperature (b) and salinity (c) at $x =$ 21 km as a function of depth. The average is taken from day 10 until the simulation is ended ($\approx 35$ days). Initial and open-ocean boundary condition profiles are also displayed in (b) and (c).}
    \label{fig:vertprofs}
\end{figure} %

%%%%%%%
\begin{figure}[H]
    \begin{subfigure}{1.0 \textwidth}
    \centering
  \includegraphics[width=1.0\textwidth,height=0.2\textheight]{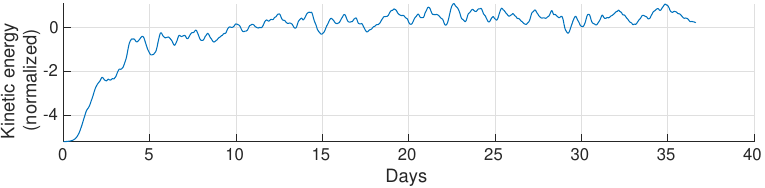}
    \end{subfigure}
   \begin{subfigure}{1.0 \textwidth}
    \centering
      \includegraphics[width=1.0\textwidth,height=0.2\textheight]{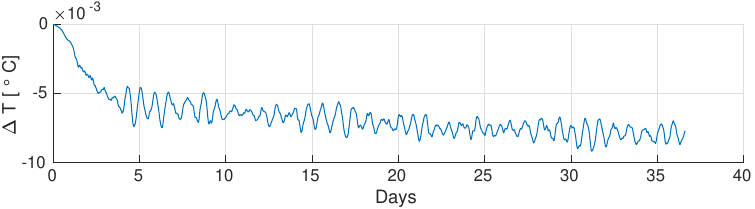}
        \end{subfigure}
 
  \begin{subfigure}{1.0 \textwidth}
    \centering
      \includegraphics[width=1.0\textwidth,height=0.2\textheight]{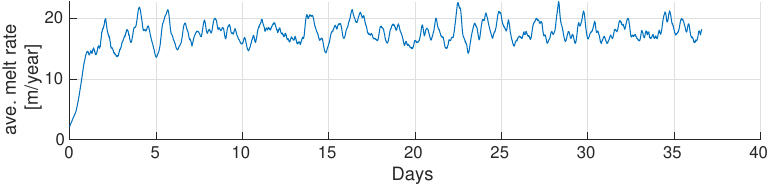}
        \end{subfigure}
        \caption{From top to bottom, normalized kinetic energy, integrated temperature change (compared to the initial state) and melt rate as functions of model days. For visualization purposes, a moving average of 0.1 days as filter width was applied.}
          \label{fig:timeseries}
\end{figure}

%\begin{figure}[H]
%  \centering
%  \includegraphics[width=1.0\textwidth]{figures/avg-melt-flux.eps}
%  \caption{}
%\end{figure}

% \begin{equation}
  % - \frac{\delta \rho g}{\rho_0} \Hat{\by} \rightarrow - \frac{\delta \rho g}{\rho_0} \Hat{\by} + \frac{1}{2} \GRAD \l( \frac{\delta \rho}{\rho_0} g y \r) ,
% \end{equation}

\section{Conclusion and further work}
This paper presents a continuous Galerkin finite element approximation of the Boussinesq equations suitable for variable density flow in general and ocean modeling and ice-ocean interaction in particular. The new methods are:
\begin{itemize}
\item A new formulation that allows for total energy conservation. The new consistent formulation of the Boussinesq system in combination with the special formulation of the tracer \eqref{eq:tracer_conservative} is shift-invariant and tracer mass, kinetic energy, potential energy, squared tracer density, momentum and angular momentum conserving (SI-MEEDMAC)
\begin{equation}
  \p_t \bu + \bu \SCAL \GRAD \bu + (\GRAD \bu) \bu + (\DIV \bu) \bu   = - \GRAD P + \DIV \l( \nu  \l( \GRAD \bu + (\GRAD \bu)^\top \r) \r) - \frac{\delta \rho g}{\rho_0} \Hat{\by} + \frac{1}{2} \GRAD \l( \frac{\delta \rho}{\rho_0} g y \r) ,
\end{equation}
with modified pressure $P = \frac{1}{\rho_0} p + g y - \frac{1}{2} \bu \SCAL \bu + \frac{1}{2} \frac{\delta \rho}{\rho_0} g y $. The formulation combines the EMAC formulation with a new force term via $\frac{1}{2} \GRAD \l( \frac{\delta \rho}{\rho_0} g y \r)$ that enables the FEM to conserve potential energy. This marks an improvement over the EMAC formulation for constant density flow \citep{Charnyi2017} and the SI-MEDMAC formulation for variable density flow \citep{lundgren2024fully} since now potential energy is also conserved.
\item A new sub-grid scale model that introduces less artificial diffusion compared to the eddy-viscosity models typically used in ocean models. The sub-grid scale model is based on a new symmetric viscosity tensor
\begin{equation}
    \sqrt{\bbnu_h}    \l( \GRAD \bu_h + (\GRAD \bu_h)^\top \r)  \sqrt{\bbnu_h}  , 
\end{equation}
which we show conserves angular momentum and dissipates kinetic energy. The method uses dynamic tensor-based viscosity as a sub-grid scale model with coefficients constructed based on the PDE residual. 
\item An open ocean boundary condition \eqref{eq:ddn_modified} that allows for inflow and outflow while conserving energy.
\end{itemize}
Together, the methods increase accuracy compared to standard methods and allow for the usage of unstructured meshes in glacier-fjord modeling. It compares well to the results of high-resolution MITgcm simulations \citep{Wiskandt} of the Sherard Osborn fjord, northern Greenland while allowing for coarser meshes in areas away from the ice and plume and avoiding overly diffusive velocity, temperature and salinity solutions. Future work involves the inclusion of the Coriolis force, investigating more realistic melt models and considering more realistic ice-sheet geometries. The last point involves developing stabilization techniques especially suitable for meshes with bad aspect ratios.

%Together, these methods enhance accuracy compared to standard approaches and allow for the use of unstructured meshes in glacier-fjord modeling. The results compare well with high-resolution MITgcm simulations \citep{Wiskandt} of the Sherard Osborn fjord, northern Greenland while allowing for coarser meshes in regions distant from the ice and plume and avoiding overly diffusive solutions for velocity, temperature and salinity. Future work includes the incorporation of the Coriolis force, the exploration of more realistic melt models and the consideration of more accurate ice-sheet geometries. The latter involves developing stabilization techniques specifically designed for meshes with poor aspect ratios.

\section{Acknowledgments}
Lukas Lundgren and Christian Helanow were funded by the Swedish e-Science Research Centre (SeRC). 

Jonathan Wiskandt has been supported by the Faculty of Science, Stockholm University (grant no. SUFV-1.2.1-0124-17). 
%Jonathan Wiskandts work was performed via a joint PhD project between the Department of Mathematics, Division of Computational Mathematics and the Department of Meteorology, Stockholm University, funded by Stockholm University. 
The computations were enabled by resources in projects NAISS 2024/5-210 and NAISS 2023/5-185 provided by the National Academic Infrastructure for Supercomputing in Sweden (NAISS), partially funded by the Swedish Research Council through grant agreement no. 2022-06725.

\appendix

\section{Symmetric stabilization}
\label{Sec:symmetric_stabilization}
The purpose of this section is to show that \eqref{eq:stabilized fem} is equivalent to \eqref{eq:stabilized fem symmetric}. We start with the tracer update \eqref{eq:tracer_stabilized}.

Setting $\bw = \proj \GRAD w_h$ inside \eqref{eq:projection_LPS}, we obtain
\begin{equation}
    ( \bbkappa_{h,vms} \proj \grad \phi_h, \proj \GRAD w) = (\bbkappa_{h,vms} \grad \phi_h, \proj \GRAD w)
\end{equation}
which is equivalent to
\begin{equation} \label{eq:step_symmetric}
  (\bbkappa_{h,vms} ( \GRAD \phi_h - \proj \grad \phi_h)  , - \proj \GRAD w) = 0.
\end{equation}%
Adding \eqref{eq:step_symmetric} to \eqref{eq:tracer_stabilized} gives \eqref{eq:tracer_stabilized_symmetric}. %

Next, we consider the momentum equations. Using that the contraction between a symmetric and anti-symmetric matrix with zero diagonal is zero \citep[Ch 11.2.1]{Larson_2013}, \ie $(A + A^\top ): (B - B^\top) = 0$ where $A$ and $B$ are square matrices, one can show that
\begin{equation} \label{eq:sym}
\begin{split}
\l(  \sqrt{ \bbnu_h }  \l( \GRAD \bu_h + \l( \GRAD \bu_h \r)^\top \r)  \sqrt{\bbnu_h} , \GRAD \bv \r) \\ 
= \frac{1}{2}  \l(  \sqrt{ \bbnu_h }  \l( \GRAD \bu_h + \l( \GRAD \bu_h \r)^\top \r)  \sqrt{\bbnu_h}, \GRAD \bv + (\GRAD \bv)^\top \r) \\ +  \frac{1}{2} \l(  \sqrt{ \bbnu_h }  \l( \GRAD \bu_h + \l( \GRAD \bu_h \r)^\top \r)  \sqrt{\bbnu_h}, \GRAD \bv - (\GRAD \bv)^\top \r) \\ = \frac{1}{2}  \l(  \sqrt{ \bbnu_h }  \l( \GRAD \bu_h + \l( \GRAD \bu_h \r)^\top \r)  \sqrt{\bbnu_h} , \GRAD \bv + (\GRAD \bv)^\top \r).
\end{split}
\end{equation}
Applying this to the momentum update yields
\begin{equation} \label{eq:LPS_proof_step}
  \begin{split}
  ( \p_t \bu_h + \bu_h \SCAL \GRAD \bu_h + (\GRAD \bu_h) \bu_h + (\DIV \bu_h) \bu_h , \bv ) -( P_h, \DIV \bv) + (\gamma_h \DIV \bu_h, \DIV \bv) \\ + \frac{1}{2} \l(   \nu    \l( \GRAD \bu_h + (\GRAD \bu_h)^\top \r) +   \sqrt{\bbnu_h} \l( \GRAD \bu_h + (\GRAD \bu_h)^\top \r)  \sqrt{\bbnu_h}, \GRAD \bv + \GRAD \bv^\top \r)   \\   + \frac{1}{2} \l( \sqrt{ \bbnu_{h,vms} }  \l( \GRAD \bu_h + (\GRAD \bu_h)^\top - \proj \GRAD \bu_h - (\proj \GRAD \bu_h)^\top  \r) \sqrt{ \bbnu_{h,vms} } , \GRAD \bv + \GRAD \bv^\top \r) & =  \\  -  \l(   \frac{g \delta \rho_h}{\rho_0} \Hat{\by} , \bv \r)  -  \frac{1}{2} \l(   \frac{\delta \rho_h  }{\rho_0} g y  , \DIV \bv \r) + \frac{1}{2} \l(  ( \bu_h \SCAL \bn )_- \bu_h , \bv \r)_{\Gamma_{oc}} + \frac{1}{2} \l(  ( \bu_h \SCAL \bn ) \bu_h , \bv \r)_{\Gamma_{oc}} \\   - (P_{hyd} , \bv \SCAL \bn )_{\Gamma_{oc}}     \quad  \forall \bv \in \bcalV_h, 
\end{split}
\end{equation}
Next, setting $ \polV = \proj \GRAD \bv$ inside \eqref{eq:projection_LPS_tensor} one can show that
\begin{equation} \label{eq:tensor_step1}
  ( \sqrt{\bbnu_{h,vms}} (\GRAD \bu_h - \proj \grad \bu_h)\sqrt{\bbnu_{h,vms}}  , - \proj \GRAD \bv) = 0,
\end{equation}
which, due to \eqref{eq:trace_transpose}, is equivalent to
\begin{equation} \label{eq:tensor_step2}
  \l(\sqrt{\bbnu_{h,vms}} ( \GRAD \bu_h^\top - (\proj \grad \bu_h)^\top) \sqrt{\bbnu_{h,vms}}  , - (\proj \GRAD \bv)^\top \r) = 0.
\end{equation}
Adding \eqref{eq:tensor_step1} and \eqref{eq:tensor_step2} to \eqref{eq:LPS_proof_step} gives \eqref{eq:momentum_stabilized_symmetric}.

\section{Algorithm to compute the discontinuity indicator} \label{appendix:discontinuity_indicator}

This algorithm is taken directly from \cite{lundgren_2024RV} where all of the steps are explained in more detail and are motivated more. 

% \begin{enumerate}
%   \item Compute the residuals \eqref{eq:residuals} and normalization functions \eqref{eq:normalization_functions}.
%   \item Compute the node-wise maximum of each normalized residual \eqref{eq:rmax}.
%   \item Solve the projection problem with smoothing \eqref{eq:res_proj_problem}.
%   \item Set $\ind_h^n$ to \eqref{eq:bound by 1}.
%   \item Compute $\kappa_h^n$ combining first-order viscosity and $\ind_h^n$ \eqref{eq:first_order_viscosity}.
%   \end{enumerate}

\begin{enumerate}
  \item On each node $i$, compute the residuals 
  \begin{equation} 
    \begin{split}
    R_{\phi_h ,i}^n &:= \l| d_t(\phi_h^n) + \bu_h^n \SCAL\GRAD \phi_h^n - \DIV ( \kappa_\phi \GRAD \phi_h^n ) - \zeta(x) ( \phi_{res}(y) - \phi_h^n ) \r|_{i} ,   \\
    R_{\bu_h,i}^n &:=    \l \|   d_t (\bu_h^n)    + \bu_h^n \SCAL\GRAD  \bu_h^n + (\GRAD \bu_h^n) \bu_h^n   + \GRAD P_h^n - \nu \DIV \l(  \GRAD \bu_h^n + (\GRAD \bu_h^n)^\top \r) + \frac{g \delta \rho_h}{\rho_0} \Hat{\by}^n - \frac{1}{2} \GRAD \l( \frac{\delta \rho_h}{\rho_0} g y \r) \r \|_{\ell_2,i}  ,
    \end{split}
    \end{equation} and normalization functions 
    \begin{equation} 
      \begin{split}
      n_{\phi_h,i}^n &:= 
     \begin{dcases} n_{\text{loc}, \phi_h, i}^n , \quad &\text{if } \quad h_i \| \GRAD \phi_h^n \|_{{\ell_2},i}  > C_\text{flat},  \\
       \max \l(   h_i^{-1} \ n_{\text{glob}} ( \phi_h^n  \| \bu_h^n \|_{\ell_2} ) , \ n_{\text{loc}, \phi_h,i}^n \r)   , \quad &\text{otherwise,} 
     \end{dcases} \\
     n_{\bu_h,i}^n &:=
     \begin{dcases} 
     n_{\text{loc}, \bu_h,i}^n, \quad &\text{if } \quad  h_i \|  \GRAD \bu_h \|_{{\ell_2},i}  > C_\text{flat}, \\
     \max \l(  h_i^{-1} \ n_{\rm{glob}} (\|\bu_h^n\|_{\ell_2}^2), \ n_{\text{loc}, \bu_h, i}^n \r)   , \quad &\text{otherwise,} 
     \end{dcases} 
     \end{split}
     \end{equation}
     where 
     \begin{equation} 
      \begin{split}
      n_{\text{loc}, \phi_h,i}^n :=&   | d_t(\phi_h^n) |_{i} +  \| \bu_h^n \|_{\ell_2,i} \| \GRAD \phi_h^n \|_{\ell_2,i} + |\DIV ( \kappa_\phi \GRAD \phi_h^n )| + |\zeta(x) ( \phi_{res}(y) - \phi_h^n )|,  \\
      n_{\text{loc}, \bu_h,i}^n :=&     \|  d_t(\bu_h^n)   \|_{\ell_2,i} + 2 \|  \bu_h^n \|_{\ell_2,i} \|  \GRAD   \bu_h^n  \|_{\ell_2,i}  \\  &  + \| \GRAD P_h^n \|_{\ell_2,i}  + \l \|  \nu \DIV \l(  \GRAD \bu_h^n+ (\GRAD \bu_h^n)^\top \r) \r \|_{\ell_2,i}  + \l \| - \frac{g \delta \rho_h}{\rho_0} \Hat{\by}^n  \r  \|_{\ell_2,i} + \frac{1}{2}  \l \| \GRAD \l( \frac{\delta \rho_h}{\rho_0} g y \r) \r \|_{\ell_2,i}  , 
      \end{split}
      \end{equation}
where
\begin{equation}
        n_{\text{glob}}(w) :=  \frac{ \l| \max_{\Omega }  w - \min_{\Omega }  w \r|^2}{  |\max_{\Omega }  w - \min_{\Omega }  w|  + \varepsilon \| w \|_{L^\infty(\Omega \times (0,t_n))} }.   
        % Fs_arr[ii] = (Fs_arr[ii] * Fs_arr[ii]) / (Fs_arr[ii] + eps * S_inf);
        \end{equation}
      
  % \item Compute the node-wise maximum of each normalized residuals
  % \begin{equation} 
    % {R_{\max,h,i}^n} := \max{ \left( \frac{R_{\phi_h,i}^n}{n_{\phi_h,i}^n} , \frac{R_{\bu_h,i}^n}{n_{\bu_h,i}^n}  \right) }.
    % \end{equation}

  \item Solve the projection problem with smoothing 
  \begin{equation} 
    \l( \tilde{\ind}_{h,\phi,\bu}^n, w \r) + C_{\Delta} \l( h^2 \GRAD \tilde{\ind}_{h,\phi,\bu}^n, \GRAD w \r)  = \l( f_\text{RV} \l(  \frac{R_{\phi,\bu}^n}{n_{\phi,\bu}^n} \r) , w \r), \quad \forall w \in \calM_h, 
    \end{equation}
    where $f_{RV}$ is a user-defined activation function.
  \item Compute: \begin{equation}  
    \ind_{h,\phi,\bu}^n := \min(1,|\tilde{\ind}_{h,\phi,\bu}^n|).
    \end{equation}
  \end{enumerate}

\bibliographystyle{abbrvnat}
\bibliography{references}

\end{document}